\UseRawInputEncoding
\documentclass[12pt]{amsart}

\usepackage[utf8]{inputenc}
\usepackage{amsmath,amsthm,amssymb,amsfonts,mathtools}
\usepackage{hyperref}
\usepackage{mathrsfs}
\usepackage{enumitem}
\usepackage{tikz-cd}
\usepackage[margin=1.6in]{geometry} 

\usepackage{xcolor}     
\hypersetup{
  colorlinks=true,
  linkcolor=blue!50!black,
  citecolor=blue!50!black,
  urlcolor=blue!50!black
}

\newtheorem{thm}{Theorem}[section]

\newtheorem{prop}[thm]{Proposition}
\newtheorem{lem}[thm]{Lemma}
\newtheorem{cor}[thm]{Corollary}
\newtheorem{rem}[thm]{Remark}
\newtheorem{mydef}[thm]{Definition}

\newcommand{\vol}{\operatorname{vol}}

\newcommand{\Bl}{\mathrm{Bl}}
\DeclareMathOperator{\ord}{ord}
\DeclareMathOperator{\ddc}{dd^c}

\title[cscK metrics on 
birational models of projective varieties]{CscK metrics on 
birational models  
of 
projective varieties}

\author{Zakarias Sj\"ostr\"om Dyrefelt}

\address{Zakarias Sj\"ostr\"om Dyrefelt \\ Institut for Matematik, Aarhus University, Ny Munkegade 118, 8000, Aarhus C, Denmark. }
\email{dyrefelt@math.au.dk}

\begin{document}

\begin{abstract}

We prove that every complex projective variety is birational to a smooth projective manifold admitting a constant scalar curvature K\"ahler (cscK) metric. For any smooth projective variety, a birational cscK model is obtained by resolving the codimension two base locus of a general Lefschetz pencil in a sufficiently positive linear system. The cscK polarization is given explicitly, producing a cscK model also when the initial variety is unstable. 
In dimension two this proves the folklore conjecture that the blowup of any complex projective surface in enough points admits cscK metrics. 




\end{abstract}

\maketitle

\section{Introduction}

\noindent A central problem in K\"ahler geometry is to understand the existence
of canonical metrics on complex manifolds and varieties.  In
particular, constant scalar curvature K\"ahler metrics have been
studied extensively since Calabi's foundational work, culminating
recently in proofs of the uniform Yau--Tian--Donaldson
correspondence \cite{Li,DarvasZhang,BoucksomJonssonYTD,Trusiani}.

A natural question is how flexible cscK existence becomes when one is
allowed to vary the polarized variety.  It is known to propagate under
suitable deformations in polarized families and under suitable
birational modifications of manifolds which already carry canonical
metrics. We ask whether birational modification can instead guarantee
existence without any metric or stability assumption on the original
variety.

\begin{thm} \label{cor main intro}
Every complex projective variety is birational to a smooth projective
variety which admits a cscK metric.
\end{thm}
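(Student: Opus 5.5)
By resolution of singularities (Hironaka), every complex projective variety is birational to a smooth projective variety $X$ of dimension $n$. It therefore suffices to treat smooth $X$. Following the abstract, the plan is to choose a very ample line bundle $L$ that is sufficiently positive and a general Lefschetz pencil $\langle s_0,s_1\rangle\subset H^0(X,L)$. Its base locus $B=\{s_0=s_1=0\}$ is a smooth codimension two submanifold. The blowup $\pi:\widetilde X=\Bl_B X\to X$ is then smooth and carries a Lefschetz fibration $f:\widetilde X\to\mathbb P^1$, with smooth fibres except for finitely many nodal ones. The exceptional divisor is $E\cong B\times\mathbb P^1$, and it is a section-like divisor of $f$. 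The candidate polarizations are $\mathcal L_\varepsilon=f^*\mathcal O(1)+\varepsilon(\pi^*L-E)$, or more generally $f^*\mathcal O_{\mathbb P^1}(k)+\varepsilon\,\pi^*L$ in a suitable combination. Note that $\pi^*L-E=f^*\mathcal O(1)$ itself, so the ample combination to use is $a\,f^*\mathcal O(1)+\varepsilon\pi^*L$. The regime of interest is the adiabatic one, where the base class is large compared to the fibre class.

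The key steps, in order, are as follows.

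\textbf{Step 1.} Choose $L=K_X^{\otimes m}\otimes A^{\otimes k}$ (or simply $A^{k}$ with $k\gg0$) so that the smooth fibres $F_t\in|L|$ have ample canonical bundle. By adjunction $K_{F}=(K_X+L)|_F$, which is ample once $L$ is positive enough. The smooth fibres therefore admit Kähler--Einstein metrics by Aubin--Yau. The nodal fibres have ordinary double points, so they are stable varieties and also carry KE metrics in the KSBA sense. The fibration is hence "stable" in the sense needed for adiabatic constructions.

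\textbf{Step 2.} Apply an adiabatic-limit existence theorem of Fine/Dervan--Sektnan type to $f:\widetilde X\to\mathbb P^1$ with polarization $\varepsilon\,\pi^*L+ a f^*\mathcal O(1)$ and $a\gg0$. This requires two inputs. First, the fibres must be cscK, which Step 1 provides. Second, on the base, the Weil--Petersson-type twisted equation must be solvable. For a base $\mathbb P^1$ this reduces to solving a twisted cscK equation $S(\omega_B)-\Lambda_{\omega_B}\alpha=\text{const}$, where $\alpha\ge0$ is the Weil--Petersson form. On a curve this is always solvable, since it is a Laplace equation once the degree condition holds. The base is $\mathbb P^1$ and the fibration has no automorphisms, as the fibres are canonically polarized. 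The obstruction from holomorphic vector fields therefore vanishes.

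\textbf{Step 3 (main obstacle).} Handle the singular fibres. Fine's theorem assumes a submersion, whereas a Lefschetz fibration has finitely many nodes. I would first try a gluing argument. One starts from the adiabatic approximate solution away from the nodes. Near each node one inserts a local model, namely the Stenzel/Calabi--Yau cone metric on the $A_1$ smoothing, suitably rescaled, or alternatively an ALE-type model for the total space $\{z_1^2+\dots+z_n^2=t\}$. One then performs a weighted-space perturbation, with weights adapted both to the adiabatic parameter and to the gluing scale. Controlling the linearized Lichnerowicz operator uniformly in these weighted norms is where I expect most of the difficulty.

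A fallback for Step 3 is to avoid analysis near the nodes. Instead one would use the uniform YTD correspondence cited above, together with a slope/intersection computation showing uniform K-stability of $(\widetilde X,\mathcal L_\varepsilon)$ for small $\varepsilon$. This could go via Dervan--Ross type stability of fibrations, where the generic fibre is uniformly K-stable and the base is a curve. The resulting cscK metric on $\widetilde X$, which is birational to $X$, proves Theorem~\ref{cor main intro}.
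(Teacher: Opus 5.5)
\textbf{There is a genuine gap at the existence step.} Your reduction to smooth $X$ via Hironaka and your choice of the Lefschetz blowup $\widetilde X=\Bl_B X\to\mathbb P^1$ match the paper. Step 3, however, is left open. The adiabatic theorems of Fine and Dervan--Sektnan require a holomorphic submersion, and a Lefschetz fibration coming from a sufficiently positive pencil has nodal fibres. Gluing Stenzel-type models into an adiabatic approximate solution, with uniform weighted estimates for the Lichnerowicz operator, is a research programme rather than a proof step.

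Your fallback is not a known theorem either. Uniform K-stability of the generic fibre over a curve does not by itself give adiabatic uniform K-stability of the total space. Singular fibres contribute through their log canonical thresholds (the discriminant boundary $B_f$), and the energy part of the Mabuchi functional needs separate control.

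\textbf{There is also an error upstream.} The restriction of $a f^*\mathcal O(1)+\varepsilon\pi^*L$ to a fibre $F_t$ is $\varepsilon L|_{F_t}$, whereas $K_{F_t}=(K_X+L)|_{F_t}$. So the K\"ahler--Einstein metrics of Step 1 lie in the wrong class and do not supply the cscK fibres that Step 2 needs. The right fibrewise polarization is the relative canonical class $\Theta:=K_{\widetilde X/\mathbb P^1}=\pi^*(K_X+L)+F$, where $F:=f^*c_1(\mathcal O_{\mathbb P^1}(1))$. This class is ample when $K_X+L$ is: pull back $\mathrm{pr}_X^*(K_X+L)+\mathrm{pr}_{\mathbb P^1}^*\mathcal O(1)$ along the incidence embedding $\widetilde X\hookrightarrow X\times\mathbb P^1$.

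\textbf{How the paper closes the gap.} It avoids any analysis near the nodes. It proves coercivity of the Mabuchi functional in $\mathcal L_\epsilon:=F+\epsilon\Theta$ through the criterion $\delta(\widetilde X,\mathcal L_\epsilon)+\Gamma^{\mathrm{pp}}_{K_{\widetilde X}}(\mathcal L_\epsilon)>0$, and then applies Chen--Cheng. Both terms are controlled as follows.
\begin{enumerate}
\item \emph{Entropy.} Hattori's adiabatic theorem sees the singular fibres only through their log canonical thresholds. These equal $1$ for ordinary double points when $n\geq2$, so $B_f=0$ and $\delta(\widetilde X,\mathcal L_\epsilon)\to\delta(\mathbb P^1,\mathcal O(1))=2>1$.
\item \emph{Energy.} The identity $K_{\widetilde X}+\mathcal L_\epsilon=(1+\epsilon^{-1})\mathcal L_\epsilon-(\epsilon^{-1}+2)F$ shows that the nef-threshold class is a positive multiple of $F$. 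Hence the fibres are optimally destabilizing, and the slope threshold is computed exactly as $\mathcal E_{K_{\widetilde X}+\mathcal L_\epsilon}(\mathcal L_\epsilon)=\frac{(K_X+L)^n+\epsilon\Theta^n}{n\,L\cdot(K_X+L)^{n-1}+\epsilon\Theta^n}>0$. Since $K_{\widetilde X}+\mathcal L_\epsilon=\pi^*(K_X+L)+\epsilon\Theta$ is K\"ahler, Gao Chen's theorem turns this into $\Gamma^{\mathrm{pp}}_{K_{\widetilde X}}(\mathcal L_\epsilon)>-1$.
\end{enumerate}
Adding the two bounds gives coercivity. No metric input on the fibres and no gluing is required, and that variational/valuative route is the idea your proposal is missing.
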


 More precisely, suppose \(X\) is a complex projective variety. Then there exists a projective
birational morphism
\[
    \mu:\widetilde X\longrightarrow X
\]
from a smooth projective variety \(\widetilde X\) carrying a constant
scalar curvature K\"ahler metric in a rational K\"ahler class. 

Our contributions here pertain to the study of smooth compact projective varieties, and the general result of Theorem \ref{cor main intro} then follows by resolution of singularities \cite{Hironaka}. Once a smooth model is given, we shall see in Theorem \ref{thm main intro} that the morphism $\mu$ may be chosen as a single well-chosen blowup of a codimension two smooth subscheme $Z \subset X$. Combined with Boucksom-Hisamoto-Jonsson \cite{BHJ2} an analogous blowup statement follows also on the algebro-geometric side for uniform K-stability.


In dimension two Theorem~\ref{cor main intro} may be viewed as a
complex-projective analogue of a stabilization theorem of Taubes, who proved that, for every closed oriented smooth four-manifold 
\(M\), the
connected sum
\(
    M\#_N\underline{\mathbb{CP}}^{\,2}
\)
admits an anti-self-dual conformal structure once \(N\) is
sufficiently large \cite{Taubes}.  
A point-blowup formulation for
extremal K\"ahler metrics is considered a folklore open question commonly attributed to Donaldson (see e.g. \cite{DervanSektnan}) and related
cscK formulations were recorded by Tipler and Sz\'ekelyhidi
\cite{Tipler,SzekelyhidiExtremalMetrics}. In complex dimension two, this circle of
questions was pursued in the scalar-flat K\"ahler setting by LeBrun,
LeBrun--Singer and Kim--LeBrun--Pontecorvo
\cite{LeBrunRuled,LeBrunSinger,KimLeBrunPontecorvo}. Here LeBrun--Singer presented a conjecture on blowups of ruled surfaces, which was settled in \cite{KimLeBrunPontecorvo}.

A related series of works produced beautiful results concerning point-blowups of cscK manifolds, showing that cscK and extremal
metrics persist under suitable birational modifications \cite{ArezzoPacard,ArezzoPacardII,ArezzoPacardSinger,SzekelyhidiBlowup,SzekelyhidiBlowupII,SeyyedaliSzekelyhidi,DervanSektnan}. The central method here is gluing, and the results are perturbative in nature, constructing cscK metrics on point blowups of manifolds which already carry such metrics.  

Recent work has moreover studied how canonical metrics, and related algebro-geometric stability notions, vary
in families. For example, Donaldson \cite{DonaldsonFamilies} proved Zariski openness
of the K\"ahler--Einstein locus in families of Fano manifolds with
discrete automorphism groups, and there have been a number of results on variation of uniform K-polystability in families (see e.g. \cite{BlumLiu, BlumLiuXu}). More recently, Dervan \cite{DervanVeryGeneral} proved a striking result that in a
flat family of smooth polarized varieties with finite automorphism
groups, the cscK locus is very general, although it may be empty. 
The above work provides strong tools to 
propagate existence from known fibers, but does not in itself
guarantee a cscK member in an arbitrary family.

Theorem~\ref{cor main intro} is complementary to both
directions as it does not propagate a previously known cscK metric, but instead produces a cscK model even when the initial manifold is unstable, 
following the philosophy of Taubes, LeBrun-Singer's Conjecture 1, and Donaldson (see also \cite{Tipler, SzekelyhidiExtremalMetrics}). 
Our second main result is a more precise version of Theorem \ref{cor main intro} when starting from a \emph{smooth} complex projective variety. We then prove
that the algebraic operation of resolving the base locus of a \emph{sufficiently positive} Lefschetz pencil, in fact also produces a K\"ahler manifold with cscK metrics in a carefully chosen K\"ahler class.

\begin{thm}\label{thm main intro}
Let \(X\) be a smooth complex projective variety of dimension
\(n\geq2\), and let \(H\) be a very ample line bundle on \(X\) such
that
\(
    K_X+H
\)
is ample. Let
\(
    |W|\subset |H|
\)
be any general Lefschetz pencil with its smooth codimension-two base locus
\(Z\subset X\), and let
\[
    \pi:\widetilde X:=\Bl_ZX\longrightarrow X,
    \qquad
    f:\widetilde X\longrightarrow\mathbb P^1
\]
be the associated blowup and Lefschetz fibration. Let \(F := f^*c_1(\mathcal O_{\mathbb{P}^1}(1))\) be the
common divisor class of the fibers of \(f\), 
and denote by 
$\Theta$
the relative canonical class $c_1(K_{\widetilde X}/\mathbb{P}^1)$.
Then \(\Theta\) is a K\"ahler class and $\widetilde X$ admits a cscK metric in 
$$
\Omega_\epsilon := F + \epsilon \Theta 
$$
for all rational $\epsilon > 0$ sufficiently small.
\end{thm}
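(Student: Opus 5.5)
Write \(E=\pi^{-1}(Z)\) for the exceptional divisor; it is a \(\mathbb P^1\)-bundle over \(Z\). The plan has two parts: first show that \(\Theta\) is Kähler, then produce cscK metrics in \(\Omega_\epsilon=F+\epsilon\Theta\) as \(\epsilon\to0\), an adiabatic limit in which the fibres of \(f\) shrink.

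\textbf{Kählerness of \(\Theta\).} Since \(|W|\subset|H|\), we have \(F=\pi^*H-E\). The blowup formula gives \(K_{\widetilde X}=\pi^*K_X+E\), and \(K_{\mathbb P^1}=\mathcal O(-2)\), so
\[
\Theta=\pi^*K_X+E+2F=\pi^*(K_X+H)+F .
\]
This is the sum of the pullback of the ample class \(K_X+H\) and the nef class \(F\). A curve \(C\) with \(\pi^*(K_X+H)\cdot C=0\) is contracted by \(\pi\), so it lies in a fibre of \(E\to Z\), and on such a curve \(F\cdot C=-E\cdot C=1>0\). By Kleiman's criterion (or Seshadri), \(\Theta\) is ample. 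On each fibre \(X_t\) of \(f\), \(\Theta|_{X_t}=K_{X_t}\) by adjunction; this is ample, so every smooth fibre has a Kähler–Einstein metric with negative scalar curvature.

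\textbf{cscK metrics in \(F+\epsilon\Theta\).} This is the adiabatic regime studied by Fine for smooth fibrations with cscK fibres, extended by Dervan–Sektnan and by Ortu to fibrations with singular fibres. I would follow the same scheme in four steps.

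\begin{enumerate}
\item Take the fibrewise Kähler–Einstein metrics \(\omega_t\in c_1(K_{X_t})\), which exist because \(K_{X_t}\) is ample. On nodal fibres use the singular KE metrics, which exist since these fibres have canonical (ordinary double point) singularities. Glue them into a relatively KE form \(\omega_V\) representing \(\Theta\).
\item Solve the base equation to leading order. On the base one needs a metric \(\omega_B\) on \(\mathbb P^1\) whose twisted scalar curvature, corrected by the Weil–Petersson-type form \(f_*(\omega_V^{n})\) and similar pushforward terms, is constant. On \(\mathbb P^1\) this is a linear Laplace equation and always solvable; the absence of holomorphic vector fields on the KE fibres removes any further obstruction.
\item Iterate fibrewise and base corrections to get approximate solutions \(\omega_k=f^*\omega_B+\epsilon\omega_V+\dots\) whose scalar curvature is constant up to \(O(\epsilon^{k})\).
\item Perturb to a genuine solution by an implicit function argument. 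This uses uniform bounds on the inverse of the Lichnerowicz operator in weighted spaces, and it requires that \(\widetilde X\) have no holomorphic vector fields. That holds because \(\widetilde X\) has a fibration whose general fibres are of general type.
\end{enumerate}

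Rationality of \(\epsilon\) plays no role in the analysis; it only makes the class rational.

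\textbf{Main obstacle.} The hardest step is the behaviour near the finitely many nodal fibres. There the relative KE metrics degenerate, the base equation picks up singular coefficients, and the linear theory needs weighted estimates near the nodes, with local models on the smoothing of \(A_1\)-singularities where the fibres look like \(\{\sum z_i^2=t\}\). I would first try to invoke the Dervan–Sektnan/Ortu results for Lefschetz-type fibrations with ample relative canonical class directly, and check that their hypotheses hold here. Those hypotheses are: nodal singular fibres, discrete automorphisms, and a fibre-ample relative class. If some hypothesis fails, I would glue model Stenzel-type metrics near the nodes by hand.
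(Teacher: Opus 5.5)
You have $\Theta=\pi^*(K_X+H)+F$ right; it is the same identity the paper uses. Your Kleiman step needs positivity on $\overline{\mathrm{NE}}(\widetilde X)\setminus\{0\}$, not only on curves. This is easy to fix: $\ker\pi_*=\mathbb R\ell$ on $N_1(\widetilde X)$. Alternatively, as in the paper, $\Theta$ is the restriction of $\mathrm{pr}_X^*(K_X+H)+\mathrm{pr}_{\mathbb P^1}^*\mathcal O(1)$ to $\widetilde X\subset X\times\mathbb P^1$.

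\textbf{The gap is the existence part.} Your second half rests on an adiabatic perturbation theorem for a fibration \emph{with singular fibres}, and you do not supply one. Fine's construction and the Dervan--Sektnan and Ortu extensions you cite are, to my knowledge, for holomorphic submersions. There the relatively Kähler--Einstein form is a smooth form on the total space, and the vertical Lichnerowicz operators are uniformly invertible on functions with zero fibre average.

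A Lefschetz fibration has nodal fibres, one over each point where the pencil meets the dual variety. Near each node your steps fail as stated:
\begin{enumerate}
\item the fibrewise KE metrics do not assemble into a smooth form representing $\Theta$;
\item the Weil--Petersson-type twisting in the base equation is no longer smooth at the critical values;
\item the uniform inverse bound in step 4 becomes a two-scale problem (collapsing fibres plus shrinking necks), which none of the results you cite covers.
\end{enumerate}
You defer this to ``I would first try to invoke \dots\ if some hypothesis fails, I would glue model Stenzel-type metrics by hand''. That is a research programme, not a proof. Two smaller inaccuracies: on $\mathbb P^1$ the base equation is the twisted Kähler--Einstein equation $\mathrm{Ric}(\omega_B)-\omega_{WP}=c\,\omega_B$, which is Liouville-type and linear only when $c=0$. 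For $n=2$ the nodal fibres are non-normal curves, so ``canonical singularities'' is not the right justification.

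\textbf{How the paper avoids this.} You never need to construct the metric. The paper proves coercivity of the Mabuchi functional $\mathrm M=\mathrm E_\rho+\mathrm H$ in $L=F+\epsilon\Theta$ and then invokes Chen--Cheng, so the singular fibres enter only algebraically.
\begin{itemize}
\item Entropy: Hattori's adiabatic theorem gives $\delta(\widetilde X,F+\epsilon\Theta)\to\delta_{(\mathbb P^1,B_f)}(\mathcal O(1))=2$. This holds because $\operatorname{lct}(\widetilde X,\widetilde X_t)=1$ for every fibre, nodal ones included, so $B_f=0$.
\item Energy: the identity $K_{\widetilde X}+L=(1+\epsilon^{-1})L-(\epsilon^{-1}+2)F$ makes the nef-threshold class a multiple of the fibre class. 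Hence fibres compute the J-slope exactly:
\[
\mathcal E_{K_{\widetilde X}+L}(L)=\frac{(K_X+H)^n+\epsilon\Theta^n}{nH\cdot(K_X+H)^{n-1}+\epsilon\Theta^n}>0 .
\]
Since $K_{\widetilde X}+L=\pi^*(K_X+H)+\epsilon\Theta$ is Kähler, Gao Chen's theorem turns this into $\Gamma^{\mathrm{pp}}_{K_{\widetilde X}}(L)>-1$.
\end{itemize}
Combined with $\delta>1$ for small $\epsilon$, this gives coercivity.
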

%
%
\noindent In particular, this solves the point-blowup conjecture on surfaces, and a codimension two version in arbitrary dimensions. 

Theorem \ref{thm main intro} is a more precise version of Theorem \ref{cor main intro} for smooth varieties, showing that the birational morphism $\mu$ can then be taken to be the blowup $\pi$ of the base locus of any sufficiently positive  Lefschetz pencil. The construction is rather flexible: start from any ample line bundle $A$, take a sufficiently large multiple $H = kA$, $k \gg 1$ of $A$. Then the assumptions of Theorem \ref{thm main intro} will be satisfied, with ampleness of $\Theta$ guaranteed by the positivity hypotheses on $H$, and a general Lefschetz pencil can be chosen in $|H|$, whose base locus provides the concrete blowup.

The only remaining non-effective part of the above statement is the size of ``small enough'' positive $\epsilon$. This would be rectified by an effective version of Hattori's adiabatic theorem for the delta invariant, which remains an interesting question for future work. 

\subsection{The surface case, Fano varieties, moduli spaces} 
Given Theorem \ref{thm main intro} there are a number of further consequences that follow directly. We here remark on some of them: 
\newline
\indent \textbf{a)}
When \(X\) is a surface, Theorem \ref{thm main intro} provides a concrete blowup construction that works, in particular, for all complex projective surfaces. The centre of the Lefschetz blowup is a
finite set of distinct points.  More precisely, if \(A\) is a very
ample line bundle on \(X\), then \(K_X+4A\) is ample, and a general
pencil in \(|4A|\) has \(16A^2\) base points.  The simultaneous blowup
of these points therefore carries cscK metrics in the classes $\Omega_\epsilon$ for $0<\epsilon\ll 1$.
This gives a uniform and classification-free version of the
point-blowup statement for projective surfaces, together with an
explicit configuration and an effective bound on the number of
points.  The precise formulation is given in
Section~\ref{sec:applications}.

\textbf{b)} The construction can be run starting from any very ample line bundle $H$ such that $K_X + H$ is ample, leaving plenty of room for choices. In the case of Fano varieties, a particularly natural choice: If \(X\) is smooth Fano and \(m\geq2\) is such that
\(-mK_X\) is very ample, then Theorem~\ref{thm main intro}
applies to a general pencil in \(|-mK_X|\).  In fixed dimension, the
integer \(m\) and the numerical size of the centre may be chosen
uniformly, by boundedness of smooth Fano varieties.

\textbf{c)} Finally, we note that since the results apply to arbitrary complex projective varieties, we may in particular also apply them to study the geometry of moduli spaces which are known to be projective. For exmaple, since Fano K-moduli spaces are projective, every irreducible
component of their reduction admits a smooth projective birational
model carrying a cscK metric, opening the door to applying metric methods to their study.  If such a component is smooth, its
ample CM class may itself be used to define the Lefschetz pencil, so
that the resulting cscK polarization is explicitly related to the CM
polarization.  

\textbf{d)} Section~\ref{sec:applications} also contains a complete
calculation for projective space. We plan to develop these applications further in a second version of this preprint.

\subsection{Comments on the proof} 
\label{subsec:intro-strategy}

We finally remark on the proof. It does \emph{not} use any type of gluing, rather we argue that after applying the procedure in Theorem \ref{thm main intro}, the Mabuchi K-energy functional is coercive for the stated polarization, and invoke Chen-Cheng \cite{ChenCheng} to deduce the existence of a cscK metric (or Boucksom-Hisamoto-Jonsson \cite{BHJ2} to deduce uniform K-stability). Note that the construction implicitly forces, after the prescribed Lefschetz blowup, that the connected component of the automorphism group is a complex torus, as in \cite{SD5}.

The essential difficulty in this approach to the blowup problem is to simultaneously control the energy and
entropy terms in the Chen--Tian decomposition of the Mabuchi
functional. First, already on surfaces the naive approach trying to make the energy part coercive after enough well-chosen blowups, can be shown to always fail when the canonical bundle of the initial surface is \emph{not} pseudoeffective, see \cite{JSD1}. Moreover, it appears very challenging to choose point blowup centres such that the interaction between the energy part and the entropy part is favourable.

The main realization in our proof is that any sufficiently positive Lefschetz pencil
provides the required
correlation between the centre and the polarization. The Lefschetz blowup construction has several advantages. First, the polarization is an adiabatic class $F + \epsilon[K_{Y/\mathbb{P}^1}]$ where $F$ is naturally chosen as a divisor class, i.e. $f^*c_1(\mathcal O_{\mathbb P^1}(1))$. Second, the entropy contribution can be estimated by a direct application of a theorem of Hattori \cite{Hattori}. Third, we observe that the energy contribution, measured via a suitable slope stability threshold defined based on a result of Gao Chen \cite{GaoChen}, can be computed explicitly for certain fibrations over curves,including the resulting Lefschetz fibration after resolving the base locus of the pencil.

The final proof is quite brief. Its brevity reflects the geometric choice: once the centre, fibration
and polarization are identified as a single package, the argument
reduces to an exact slope calculation and an adiabatic entropy limit.
\\

\paragraph{\textbf{Outline of the paper.}}
Section~2 recalls the required background on coercivity, energy and
entropy thresholds, Lefschetz pencils, and Hattori's adiabatic theorem.
In Section~3 we identify the optimally destabilizing subvarieties for
the slope threshold of the fibrations used here.  
Section~4 proves
the main theorems \ref{cor main intro} and \ref{thm main intro}. Section~5 treats the surface case, Fano varieties, Fano
K-moduli spaces, and includes a fully explicit calculation for
projective space.

\subsection*{Acknowledgements.} 
The author would like to thank Claudio Arezzo for bringing this beautiful problem to his attention during his postdoctoral stay at ICTP in Trieste. This work was supported by a Villum Young Investigator Grant from the Villum Foundation, project no. 60786.

\bigskip

\section{Preliminaries}\label{sec:preliminaries}

\noindent Throughout the paper $Y$ denotes a smooth complex projective variety of dimension $n \geq 2$. In the main proof $Y$ denotes the Lefschetz blowup of an initial variety $X$. In general preliminary statements $Y$ is an arbitrary smooth complex projective variety. We use additive notation for line bundles and their first
Chern classes. All intersection
products are numerical.

\subsection{cscK metrics and coercivity}

\noindent Let $Y$ be a compact K\"ahler manifold and $L$ an ample line bundle on $Y$. Write $\alpha := c_1(L) \in H^{1,1}(Y,\mathbb{Z})$ for its associated first Chern class and set $V := \alpha^n$ for the K\"ahler volume. 
We say that $Y$ \emph{admits a constant scalar curvature (cscK) metric in $\alpha$} if there exists a K\"ahler form $\omega \in \alpha$ whose scalar curvature is constant, i.e. $\omega$ solves the non-linear fourth order elliptic \emph{cscK equation}
$$
S(\omega) = \overline{S},
$$
where
$$
S(\omega) = \mathrm{Tr}_{\omega}\mathrm{Ric}(\omega) , \; \; \; \overline{S} = -n\frac{c_1(K_Y).\alpha^{n-1}}{\alpha^n} \in \mathbb{R}. 
$$
Existence of cscK metrics is known to be equivalent to coercivity (modulo automorphisms) of the Mabuchi functional, defined on the space of K\"ahler potentials of $Y$. To define this and fix notation, let $\omega \in \alpha$ be a reference K\"ahler form, and write
$$
  \mathcal H_\omega
  :=
  \{\varphi\in C^\infty(Y,\mathbb R):
  \omega_\varphi:=\omega+\ddc\varphi>0\}
$$
The Mabuchi functional \(\mathrm M=\mathrm M_\omega\), normalized by
\(\mathrm M(0)=0\), is characterized by
\[
  \frac{d}{dt}\mathrm M(\varphi_t)
  =
  -\frac1V\int_Y
  \dot\varphi_t
  \bigl(S(\omega_{\varphi_t})-\overline S\bigr)
  \omega_{\varphi_t}^{\,n};
\]
in particular, its critical points $\varphi$ correspond precisely to the cscK metrics $\omega_\varphi$ in
\(\alpha\).

We moreover recall the Chen--Tian decomposition in the normalization used
throughout. Write $\rho := -\mathrm{Ric}(\omega) \in c_1(K_Y)$ for the negative of the Ricci curvature form, 
and denote by
\[
  \mathrm E(\varphi)
  :=
  \frac{1}{(n+1)V}
  \sum_{j=0}^{n}
 \int_Y
  \varphi\,
  \omega_\varphi^{\,j}\wedge\omega^{n-j}
\]
the Monge--Amp\`ere energy. We also recall Aubin's \(I\)- and \(J\)-functionals,
\[
    \mathrm I_\omega(\varphi)
    :=
    \frac{1}{V}
   \int_Y
    \varphi\bigl(\omega^n-\omega_\varphi^n\bigr),
    \qquad
    \mathrm J_\omega(\varphi)
    :=
    \frac{1}{V}\int_Y\varphi\,\omega^n-E(\varphi),
\]
The entropy and the remaining energy part
are respectively
\[
  \mathrm H(\varphi)
  :=
  \frac1V\int_Y
  \log\frac{\omega_\varphi^{\,n}}{\omega^n}\,
  \omega_\varphi^{\,n},
\]
and 
\begin{equation} \label{eq:energy functional}
  \mathrm E_\rho(\varphi)
  :=
  \frac1V\sum_{j=0}^{n-1}
 \int_Y
  \varphi\,\rho\wedge
  \omega_\varphi^{\,j}\wedge\omega^{n-1-j}
  +\overline S\mathrm E(\varphi).
\end{equation}
Thus
\[
  \mathrm M=\mathrm E_\rho+\mathrm H.
\]
Both terms are invariant under the addition of constants, and
we say that $M$ is \emph{coercive on $\mathcal{H}_{\omega}$} if there are constants $C,D > 0$ such that
$$
\mathrm M \geq C \mathrm E_{\omega} - D
$$
on $\mathcal{H}_{\omega}$. Here $\mathrm E_\omega := \mathrm I- \mathrm J$, a notation motivated by e.g. \cite[Proposition 9]{SD4}. In order for the full equivalence between existence of cscK metrics and coercivity to hold, one must take into account the action of the automorphism group $\mathrm{Aut}_0(Y)$, but that is not needed for the following statement, which is all we will use here. 

\begin{thm}[\cite{ChenCheng}]
Suppose that $\mathrm M$ is coercive on $\mathcal{H}_\omega$. Then $Y$ admits a cscK metric in $[\omega] = c_1(L)$.
\end{thm}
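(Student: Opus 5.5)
The statement is the Chen--Cheng existence theorem, which the paper quotes rather than proves. A proof sketch therefore has to reconstruct the strategy of \cite{ChenCheng}: a continuity path, \emph{a priori} estimates for the coupled cscK system, and properness used to control entropy along the path.

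First I would fix notation. Write the cscK equation as the coupled second-order system
\[
  \omega_\varphi^n = e^{F}\omega^n,
  \qquad
  \Delta_{\omega_\varphi}F = -\overline S + \mathrm{Tr}_{\omega_\varphi}\mathrm{Ric}(\omega).
\]
This form splits a fourth-order problem into two second-order ones. I would then embed it in the twisted continuity path
\[
  t\bigl(S(\omega_\varphi)-\overline S\bigr) = (1-t)\bigl(\mathrm{Tr}_{\omega_\varphi}\omega - n\bigr),
  \qquad t\in[0,1].
\]
At $t=0$ this is solved by $\varphi=0$. For $t<1$ the linearized operator is the Lichnerowicz operator plus a positive twist, hence invertible, so the set of solvable $t$ is open in $[0,1)$.

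Along the path, the solutions $\varphi_t$ minimize the twisted functional $t\mathrm M+(1-t)\mathrm J$. Comparing with $\varphi=0$ gives $t\mathrm M(\varphi_t)+(1-t)\mathrm J(\varphi_t)\le 0$. Since $\mathrm J$ is nonnegative and the coercivity hypothesis gives $\mathrm M \geq C \mathrm E_{\omega} - D$, this yields a uniform bound on $\mathrm E_\omega(\varphi_t)=\mathrm I-\mathrm J$ for $t$ bounded away from $0$. The bound on $\mathrm E_\omega$ gives a bound on the $d_1$-distance, and then on $\sup\varphi$ after normalization.

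Next, the relation $\mathrm M=\mathrm E_\rho+\mathrm H$ bounds the energy part $\mathrm E_\rho$ linearly by $d_1$. So a bound on $\mathrm M$ gives a uniform bound on the entropy $\mathrm H(\varphi_t)$.

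The core step, and the main obstacle, is the Chen--Cheng \emph{a priori} estimate. It says that a bound on the entropy $\int F e^F\omega^n$ implies $C^0$ bounds on $\varphi$ and $F$, and then $C^{1}$ and $C^{2}$ bounds. I would follow their approach:
\begin{itemize}
  \item Compare $\varphi$ and $F$ with solutions of auxiliary complex Monge--Amp\`ere equations whose right-hand side is $e^F\sqrt{F^2+1}$.
  \item Apply the Alexandrov maximum principle to $F+\epsilon\psi-\lambda\varphi$.
  \item Use the entropy bound and Ko{\l}odziej-type estimates to bound $\|\varphi\|_{C^0}$ and $\|F\|_{C^0}$.
  \item Run the gradient and Laplacian bounds through integral (Moser-iteration) estimates of $e^{-\kappa F}|\nabla\varphi|^2$ and $e^{-\kappa F}\mathrm{tr}_\omega\omega_\varphi$. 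This is needed because $F$ is only controlled in $C^0$, not in $C^2$.
\end{itemize}
With these, Evans--Krylov and Schauder theory give uniform higher-order bounds on $\varphi_t$.

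The uniform bounds give closedness of the solvable set, so one can pass to the limit $t\to 1$. A subsequence converges smoothly to a solution $\varphi_1$ of the cscK equation, which is a cscK metric in $[\omega]=c_1(L)$. No automorphism modding is needed because coercivity is assumed on all of $\mathcal H_\omega$.
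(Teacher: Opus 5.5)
Your outline is correct and is the Chen--Cheng argument that the paper invokes by citation without a proof of its own: the twisted continuity path, minimization of the twisted K-energy, coercivity giving uniform $d_1$ and entropy bounds, then the Chen--Cheng \emph{a priori} estimates and closedness up to $t=1$. Two small corrections: first, the path you wrote is the Euler--Lagrange equation of $t\mathrm M+(1-t)(\mathrm I-\mathrm J)=t\mathrm M+(1-t)\mathrm E_\omega$, not of $t\mathrm M+(1-t)\mathrm J$ with Aubin's $\mathrm J$, which is harmless since $\mathrm I-\mathrm J\geq 0$ still gives $\mathrm M(\varphi_t)\leq 0$; second, solvability for small $t>0$ does not follow from invertibility of the twisted Lichnerowicz operator, because the equation drops to second order at $t=0$, so it needs a separate perturbation argument at that endpoint (Chen's work introducing this path; see also Hashimoto).
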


\noindent When no confusion can arise, we shall simply say that $M$ is coercive in $c_1(L)$.

\medskip

\subsection{Energy and entropy stability thresholds and sufficient existence criteria for cscK metrics} \label{subsec:thresholds}

We will make use of a \emph{sufficient} criterion for existence of cscK metrics using coercivity thresholds as follows. Coercivity is equivalent to
$$
 \sup \{C \in \mathbb R : \exists D_{C} \in \mathbb R :  \mathrm M \geq C \mathrm E_{\omega} - D_C \; \text{on} \; \mathcal H_{\omega} \} > 0.
$$
We can moreover introduce the following suprema associated with each term in the Chen-Tian decomposition, which were studied in \cite{SD4} and \cite{Zhang} respectively. The \emph{energy threshold} is
\newline
$$
 \Gamma^{\mathrm pp}_{\rho}(L) := \sup \{C \in \mathbb R : \exists D_{C} \in \mathbb R :  \mathrm E_\rho \geq C \mathrm E_{\omega} - D_C \ \text{on} \ \mathcal H_{\omega} \} > -\infty
$$
\newline
and the \emph{analytic delta invariant} is defined as
\newline
$$
\delta^{\mathrm an}(L) := \sup \{C \in \mathbb R : \exists D_{C} \in \mathbb R :  \mathrm H \geq C \mathrm E_{\omega} - D_C \; \text{on} \; \mathcal H_{\omega} \} > 0.
$$
\newline
The above quantities are purely cohomological, i.e. do not depend on representatives within the cohomology classes involved (see \cite[Remark 3]{SD4} and \cite{Zhang}). 
We choose to emphasize the line bundles $L$ and $K_Y$ in our notation, as this is natural for their use in the proof of our main results. 
Because $E_\rho\geq a \mathrm E_{\omega}-A$ and $H\geq b \mathrm E_{\omega}-B$ implies $$M\geq(a+b) \mathrm E_{\omega}-(A+B),$$ we moreover have the inequality
\[
    \Gamma^{\mathrm{pp}}_\rho(L)
    +\delta^{\mathrm{an}}(L)
    \leq
    \sup\left\{
        C\in\mathbb R:
        \exists D_C\in\mathbb R,\ 
        M\geq C \mathrm E_{\omega}-D_C
        \text{ on }\mathcal H_\omega
    \right\}.
\]
Hence a sufficient condition for coercivity is that the left-hand
side above is positive.
\begin{prop}[Sufficient existence criterion for cscK] \label{prop:energy-entropy}
Suppose that 
\newline
$$
\Gamma^{\mathrm pp}_{\rho}(L) + \delta^{\mathrm an}(L) > 0.
$$
\newline
Then the Mabuchi functional is coercive. In particular, $Y$ admits
a cscK metric in $c_1(L)$.
\end{prop}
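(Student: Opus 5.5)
The statement follows by unwinding the definitions of the two thresholds and adding the resulting lower bounds; no input beyond the Chen--Tian decomposition $\mathrm M=\mathrm E_\rho+\mathrm H$ and the theorem of Chen--Cheng is needed.

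\emph{Step 1: choose slightly suboptimal constants.} Set $\eta:=\Gamma^{\mathrm pp}_\rho(L)+\delta^{\mathrm an}(L)>0$. By the definition of the two suprema, there are real numbers $a<\Gamma^{\mathrm pp}_\rho(L)$ and $b<\delta^{\mathrm an}(L)$ with $a+b\geq \eta/2>0$, together with constants $A,B\in\mathbb R$ such that
\[
\mathrm E_\rho\geq a\,\mathrm E_\omega-A,\qquad \mathrm H\geq b\,\mathrm E_\omega-B \quad\text{on } \mathcal H_\omega .
\]
Both suprema are finite here: $\delta^{\mathrm an}(L)$ is bounded above by considering potentials along which the entropy grows at most linearly in $\mathrm E_\omega$, and the hypothesis forces $\Gamma^{\mathrm pp}_\rho(L)>-\infty$. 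We therefore pick $a$ and $b$ each within $\eta/4$ of its respective supremum.

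\emph{Step 2: add the bounds.} Summing the two inequalities and using $\mathrm M=\mathrm E_\rho+\mathrm H$ gives
\[
\mathrm M\geq (a+b)\,\mathrm E_\omega-(A+B) \quad\text{on } \mathcal H_\omega,
\]
with $a+b>0$. This is exactly coercivity in the sense defined above, with $C=a+b$ and $D=A+B$.

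\emph{Step 3: existence.} The cited theorem of Chen--Cheng now yields a cscK metric in $[\omega]=c_1(L)$.

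The only point needing care is in Step 2: we must have $\mathrm E_\omega=\mathrm I-\mathrm J\geq 0$, so that decreasing a coefficient only weakens a lower bound. This nonnegativity is standard, since $\mathrm I-\mathrm J$ is comparable to $\mathrm J\geq0$. There is no real obstacle here; the substance lies in later estimating each of the two thresholds.
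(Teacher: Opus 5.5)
Your proof is correct and is exactly the paper's argument: the paper observes that bounds $\mathrm E_\rho\geq a\,\mathrm E_\omega-A$ and $\mathrm H\geq b\,\mathrm E_\omega-B$ add to $\mathrm M\geq(a+b)\,\mathrm E_\omega-(A+B)$. Hence $\Gamma^{\mathrm{pp}}_\rho(L)+\delta^{\mathrm{an}}(L)$ is at most the coercivity threshold of $\mathrm M$, and the paper then invokes Chen--Cheng.

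Your side remarks on the finiteness of the suprema and on $\mathrm E_\omega\geq0$ are harmless but not needed. Step 2 never decreases a coefficient, and elements $a,b$ of the two sets with $a+b>0$ exist directly from the definition of supremum.
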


\noindent Both terms involved have algebraic expressions, useful to compute or estimate the thresholds. 

\subsubsection{A slope criterion for positivity of the energy threshold} 
From this point on, all quantities under consideration are
cohomological, and we freely use the same notation for a line bundle
and its first Chern class. More generally, $L$ will denote a K\"ahler
class and $\beta\in H^{1,1}(Y,\mathbb R)$ a real $(1,1)$-class.
For such a class $\beta$, the notation $\mathrm E_\beta$ and
$\Gamma^{\mathrm{pp}}_\beta(L)$ refers to the corresponding twisted
energy and its coercivity threshold, defined using any smooth
representative of $\beta$.
More precisely, if $\theta\in\beta$ is a smooth representative and
\[
\underline\beta
:=
n\frac{\beta\cdot L^{n-1}}{L^n},
\]
then $E_\beta$ is obtained from \eqref{eq:energy functional} by replacing $\rho$ by
$\theta$ and $\bar S$ by $-\underline\beta$.

To study the energy part of the Mabuchi functional, and its non-archimedean counterpart, consider the following stability threshold inspired by the study of J-stability:

\begin{mydef}[Slope threshold]\label{def:pp-threshold}
Let $L$ be a K\"ahler class on $Y$ and let
$\beta\in H^{1,1}(Y,\mathbb R)$.
For an irreducible subvariety
$V\subset Y$ of dimension $1\leq p\leq n-1$, set
\[
  \mathcal E_{\beta,L}(V)
  :=
  \frac{
  \left(
  n\dfrac{\beta\cdot L^{n-1}}{L^n}L-p\beta
  \right)\cdot L^{p-1}\cdot V
  }{(n-p)L^p\cdot V}.
\]
Define also
\[
\mathcal E_\beta(L)
:=
\inf_{1\le p\le n-1}
\inf_{\substack{V\subset Y\ {\rm irreducible}\\ \dim V=p}}
\mathcal E_{\beta,L}(V).
\]
which we shall refer to as the \emph{slope} or \emph{slope threshold}.
\end{mydef}

\noindent The normalization is chosen so that $\mathcal E_{L,L}(V)=1$. Moreover,

\begin{lem} \label{lem magic J-formula} For every $a\in\mathbb R$,
\begin{equation}\label{eq:affine-energy}
  \Gamma^{\mathrm{pp}}_{\beta+aL}(L)
  =\Gamma^{\mathrm{pp}}_\beta(L)+a, \; \; \text{and} \; \; \mathcal E_{\beta + aL}(L) = \mathcal E_{\beta}(L) + a.
\end{equation}
\end{lem}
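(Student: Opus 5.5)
Both identities are direct computations from the definitions: the twisted energy and the slope are affine in $\beta$, and the $L$-direction is absorbed by the normalizing constants. I treat the two separately.

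\emph{Energy threshold.} Fix a K\"ahler form $\omega\in L$ and a smooth representative $\theta\in\beta$, and take $\theta+a\omega$ as the representative of $\beta+aL$. Then
\[
\underline{\beta+aL}=\underline\beta+n a ,
\]
since $n\,L\cdot L^{n-1}/L^n=n$. By \eqref{eq:energy functional}, with $\rho$ replaced by $\theta+a\omega$ and $\overline S$ by $-\underline{\beta+aL}$,
\[
\mathrm E_{\beta+aL}(\varphi)=\mathrm E_\beta(\varphi)+a\Bigl(\frac1V\sum_{j=0}^{n-1}\int_Y\varphi\,\omega\wedge\omega_\varphi^{j}\wedge\omega^{n-1-j}-n\,\mathrm E(\varphi)\Bigr).
\]
The key step is to identify the bracket with $\mathrm E_\omega=\mathrm I-\mathrm J$. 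Write $S_k:=\frac1V\int_Y\varphi\,\omega_\varphi^{k}\wedge\omega^{n-k}$. Then $\mathrm E=\frac1{n+1}\sum_{k=0}^nS_k$, the first sum in the bracket equals $\sum_{j=0}^{n-1}S_j$, and
\[
\mathrm I-\mathrm J=S_0-S_n-S_0+\mathrm E=\mathrm E-S_n .
\]
Hence the bracket is
\[
\sum_{j=0}^{n-1}S_j-n\,\mathrm E=(n+1)\mathrm E-S_n-n\,\mathrm E=\mathrm E-S_n=\mathrm I-\mathrm J .
\]
This gives $\mathrm E_{\beta+aL}=\mathrm E_\beta+a\,\mathrm E_\omega$ exactly. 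Consequently $\mathrm E_\beta\ge C\,\mathrm E_\omega-D$ holds if and only if $\mathrm E_{\beta+aL}\ge (C+a)\,\mathrm E_\omega-D$, and taking suprema yields $\Gamma^{\mathrm{pp}}_{\beta+aL}(L)=\Gamma^{\mathrm{pp}}_\beta(L)+a$. Independence of the representative $\theta$ is the cohomological invariance already recalled, so the particular choice $\theta+a\omega$ is harmless.

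\emph{Slope threshold.} The numerator of $\mathcal E_{\beta,L}(V)$ is affine in $\beta$. Replacing $\beta$ by $\beta+aL$ adds
\[
a\bigl(nL-pL\bigr)\cdot L^{p-1}\cdot V=a(n-p)\,L^p\cdot V .
\]
Dividing by $(n-p)L^p\cdot V$ gives $\mathcal E_{\beta+aL,L}(V)=\mathcal E_{\beta,L}(V)+a$ for every irreducible $V$ with $1\le p\le n-1$. This is consistent with the normalization $\mathcal E_{L,L}(V)=1$: take $\beta=0$ and $a=1$. Since the shift by $a$ is the same for every $V$, it commutes with the double infimum, and $\mathcal E_{\beta+aL}(L)=\mathcal E_\beta(L)+a$ follows.

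The only step with real content is the bookkeeping showing that the bracket equals $\mathrm I-\mathrm J$. It rests on the sign convention $\overline S=-n\,c_1(K_Y)\cdot\alpha^{n-1}/\alpha^n$ and on the $1/(n+1)$ normalization of $\mathrm E$, which must be matched carefully; everything else is formal.
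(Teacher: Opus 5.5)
Your proof is correct. The identity $\mathrm E_{\beta+aL}=\mathrm E_\beta+a(\mathrm I-\mathrm J)$ holds for the representative $\theta+a\omega$, and the slope numerator shifts by $a(n-p)\,L^p\cdot V$; both check out against the paper's normalizations, and the resulting translation of admissible constants commutes with the supremum and the infimum. The paper states this lemma without proof, treating it as immediate from the definitions, and your computation is exactly the verification it leaves implicit.
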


\begin{rem}[Example: Surface form of the threshold]\label{rem:surface-threshold} 
If $\dim Y=2$, then the only proper positive-dimensional subvarieties are
curves, and Kleiman's criterion gives
\[
  t_{\mathrm{nef}}(Y,L)
  :=\inf\{t\in\mathbb R:tL-K_Y\ \text{is nef}\}
  =\sup_{C\subset Y}\frac{K_Y\cdot C}{L\cdot C}.
\]
Consequently,
\[
  \mathcal E_{K_Y}(L)
  =2\frac{K_Y\cdot L}{L^2}-t_{\mathrm{nef}}(Y,L),
\]
and in \cite{JSD1} we prove that ampleness of
\[
  Q_L:=2\frac{K_Y\cdot L}{L^2}L-K_Y,
\]
is equivalent to coercivity of the energy part $\mathrm E_\rho$ of the Mabuchi functional. It is worth noting the identity
\[
  Q_L^2=K_Y^2.
\]
For the blowup of $r$ distinct points, the last square equals
$K_X^2-r$. This provides a scale-invariant obstruction behind the difficulty of
making the canonical energy nonnegative by point blowups alone, and from this it follows, see \cite{JSD1}, that for any compact K\"ahler surface with non-psef canonical bundle $K_X$, the thresholds $\Gamma^{\mathrm{pp}}_{K_Y}(L)$ and $\mathcal E_{K_Y}(L)$ are always strictly negative.
\end{rem}

\noindent We will rely on the following main theorem of Gao Chen \cite{GaoChen} on J-stability, which crucially assumes the auxiliary class $\beta$ is a K\"ahler class.

\begin{thm}[\cite{GaoChen}] \label{thm:GaoChen}
Let $L$ and $\beta$ be K\"ahler classes on $Y$. Then
$
  \mathcal E_\beta(L) > 0
$
if and only if $\Gamma^{\mathrm{pp}}_\beta(L) > 0$. 
\end{thm}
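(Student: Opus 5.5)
The plan is to reduce both positivity conditions to solvability of the J-equation for the pair $(L,\beta)$, and to use Lemma \ref{lem magic J-formula} to pass between pointwise and uniform positivity. Fix a Kähler form $\omega\in L$ and a Kähler form $\chi\in\beta$, and set $c:=n\,\beta\cdot L^{n-1}/L^n$. Up to a bounded term, $\mathrm E_\beta$ is the $\mathrm J_\chi$-functional of Song--Weinkove on $\mathcal H_\omega$. Its Euler--Lagrange equation is the J-equation
\[
  \mathrm{Tr}_{\omega_\varphi}\chi=c .
\]
A direct expansion gives the numerical condition attached to this equation. For every irreducible $V$ of dimension $1\le p\le n-1$, Gao Chen's condition
\[
  (cL-p\beta)\cdot L^{p-1}\cdot V>0
\]
holds if and only if $\mathcal E_{\beta,L}(V)>0$, because the denominator $(n-p)L^p\cdot V$ is positive.

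I will use three inputs, none proved here.
\begin{itemize}
\item[(a)] Gao Chen: if $(cL-p\beta)\cdot L^{p-1}\cdot V\ge \varepsilon\,(\ldots)$ holds uniformly in $V$, then the J-equation has a smooth solution. This is where $\beta$ Kähler is used.
\item[(b)] Collins--Székelyhidi and Song--Weinkove: the J-equation is solvable if and only if $\mathrm J_\chi$ is proper. In the normalization of \S\ref{subsec:thresholds} this says $\Gamma^{\mathrm{pp}}_\beta(L)>0$.
\item[(c)] A solution of the J-equation forces strict positivity of each numerical quantity $\mathcal E_{\beta,L}(V)$. This follows from Lejmi--Székelyhidi, by restricting the pointwise inequality to $V$ and integrating.
\end{itemize}

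\emph{Proof of ``$\Leftarrow$''.} Assume $\Gamma^{\mathrm{pp}}_\beta(L)=:2\varepsilon>0$, and take $\varepsilon$ small enough that $\beta-\varepsilon L$ is still Kähler. By Lemma \ref{lem magic J-formula},
\[
  \Gamma^{\mathrm{pp}}_{\beta-\varepsilon L}(L)=\Gamma^{\mathrm{pp}}_\beta(L)-\varepsilon=\varepsilon>0 .
\]
So the J-equation for $(L,\beta-\varepsilon L)$ is solvable by (b). By (c), $\mathcal E_{\beta-\varepsilon L,L}(V)>0$ for every $V$. The additivity in Lemma \ref{lem magic J-formula} holds subvariety by subvariety, since it comes from the normalization $\mathcal E_{L,L}(V)=1$. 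Hence $\mathcal E_{\beta,L}(V)>\varepsilon$ for all $V$, and taking the infimum gives $\mathcal E_\beta(L)\ge\varepsilon>0$.

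\emph{Proof of ``$\Rightarrow$''.} Assume $\mathcal E_\beta(L)=:2\varepsilon>0$, again with $\beta-\varepsilon L$ Kähler. Lemma \ref{lem magic J-formula} gives $\mathcal E_{\beta-\varepsilon L}(L)=\varepsilon>0$. This is a uniform lower bound over all subvarieties, which is exactly the uniform hypothesis in (a). So the J-equation for $(L,\beta-\varepsilon L)$ is solvable, and (b) gives $\Gamma^{\mathrm{pp}}_{\beta-\varepsilon L}(L)\ge 0$. Lemma \ref{lem magic J-formula} then yields $\Gamma^{\mathrm{pp}}_\beta(L)\ge\varepsilon>0$.

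The main obstacle is matching normalizations. The threshold $\Gamma^{\mathrm{pp}}$ is measured against $\mathrm E_\omega=\mathrm I-\mathrm J$, whereas properness in (b) is usually phrased against $\mathrm J$ or the $d_1$-distance. These are mutually bounded, so only the sign of the threshold matters, and that is all the argument uses.

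I also need the J-equation constant to be positive. This is automatic here: $\beta$ is Kähler, so $c=n\,\beta\cdot L^{n-1}/L^n>0$, and the same holds for $\beta-\varepsilon L$ when $\varepsilon$ is small.

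Finally, the argument deliberately uses the affine shift of Lemma \ref{lem magic J-formula}. This lets me avoid proving (c) in a uniform form: pointwise positivity for the shifted class already gives uniform positivity for $\beta$.
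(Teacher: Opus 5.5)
Your proposal is correct and matches the paper's approach: the paper gives no separate argument and imports this equivalence directly from Gao Chen's theorem, implicitly combined with the standard equivalence between solvability of the J-equation and coercivity of $\mathrm E_\beta$ and the Lejmi--Sz\'ekelyhidi necessity of the numerical condition, and your chain (a)--(c) together with the affine shift of Lemma~\ref{lem magic J-formula} makes that citation explicit. One cosmetic fix: choose $0<\varepsilon\le \tfrac12\Gamma^{\mathrm{pp}}_\beta(L)$ (respectively $0<\varepsilon\le\tfrac12\mathcal E_\beta(L)$) with $\beta-\varepsilon L$ K\"ahler, rather than defining $\varepsilon$ as half the threshold and then shrinking it.
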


\subsubsection{Algebraic vs analytic delta invariant} \label{subsec:algdelta}

Let \((Y,L)\) be a smooth polarized variety of dimension \(n\).
A \emph{prime divisor over \(Y\)} is a prime divisor \(G\) on a
smooth projective variety \(Y'\) equipped with a projective birational
morphism
\(
    \mu:Y'\longrightarrow Y.
\)
We set
\[
    A_Y(G)
    :=
    1+\ord_G\bigl(K_{Y'}-\mu^*K_Y\bigr)
\]
and
\[
    S_L(G)
    :=
    \frac{1}{L^n}
    \int_0^{+\infty}
    \vol_{Y'}\bigl(\mu^*L-xG\bigr)\,dx.
\]
These quantities depend only on the divisorial valuation
\(\ord_G\), and not on the choice of the smooth model on which \(G\)
appears.

The algebraic delta invariant, or stability threshold, is
\[
    \delta(Y,L)
    :=
    \inf_{G\ \mathrm{prime\ over}\ Y}
    \frac{A_Y(G)}{S_L(G)}.
\]
Equivalently, the infimum may be taken over all nontrivial divisorial
valuations \(v\) on \(Y\):
\[
    \delta(Y,L)
    =
    \inf_v\frac{A_Y(v)}{S_L(v)}.
\]
This is the valuative formulation of the stability threshold of
Fujita--Odaka and Blum--Jonsson
\cite{FujitaOdakaDelta,BlumJonssonThresholds}.

A theorem of K.~Zhang shows that the two
thresholds are equal.

\begin{thm}[\cite{Zhang}]
  Let \(Y\) be a smooth projective variety and let \(L\) be an ample
  \(\mathbb R\)-line bundle. Then
  \[
    \delta^{\mathrm{an}}\bigl(Y,c_1(L)\bigr)
    =
    \delta(Y,L).
  \]
\end{thm}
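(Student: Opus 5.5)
The plan is to prove the two inequalities $\delta^{\mathrm{an}}(Y,c_1(L))\le\delta(Y,L)$ and $\delta^{\mathrm{an}}(Y,c_1(L))\ge\delta(Y,L)$ separately. Both directions rest on one reformulation. By Legendre duality for the entropy (the Gibbs variational principle), an inequality $\mathrm H\ge C\,\mathrm E_\omega-D$ on $\mathcal H_\omega$ is equivalent to a uniform Moser--Trudinger type bound
\[
\int_Y e^{-C(\varphi-\sup\varphi)}\,\omega^n\le e^{D'}\quad\text{for all }\varphi\text{ with }C\,\mathrm E_\omega(\varphi)\ \text{suitably normalized}.
\]
Here the normalization is obtained by pairing the exponent $-C\psi$ with the probability measure $\omega_\varphi^n/V$ and comparing $\int\psi\,\omega_\varphi^n$ with $\mathrm E_\omega$. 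I would first record this equivalence carefully. The coercivity threshold is then the supremum of exponents $C$ for which such a uniform integrability estimate holds, in the spirit of Tian's $\alpha$-invariant but with the energy normalization.

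\textbf{Upper bound $\delta^{\mathrm{an}}\le\delta$.} Fix a prime divisor $G$ over $Y$. I would use the filtration of $R(Y,L)$ induced by $\mathrm{ord}_G$ to build a psh geodesic ray $\varphi_t$, for instance via the Phong--Sturm construction from approximating test configurations. Along this ray:
\begin{itemize}
\item the slope of $\mathrm E_\omega=\mathrm I-\mathrm J$ is the non-archimedean $(\mathrm I-\mathrm J)^{\mathrm{NA}}$ of the associated non-archimedean potential, which is computed from the concave transform of the filtration and is bounded below in terms of $S_L(G)$;
\item the slope of $\mathrm H$ is at most $A_Y(G)$, since the log discrepancy controls the non-archimedean entropy.
\end{itemize}
If $\mathrm H\ge C\,\mathrm E_\omega-D$, dividing by $t$ and letting $t\to\infty$ yields $A_Y(G)\ge C\,S_L(G)$, up to matching the normalizations. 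Rather than work with the ray of a single divisor, I may instead pass through the Boucksom--Jonsson identification of $\delta$ as $\inf \mathrm H^{\mathrm{NA}}/(\mathrm I-\mathrm J)^{\mathrm{NA}}$ over test configurations. Whichever route is used, the normalization must produce exactly $S_L(G)$ in the denominator.

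\textbf{Lower bound $\delta^{\mathrm{an}}\ge\delta$.} This is the direction I expect to be the main obstacle, and I would prove it by quantization.
\begin{enumerate}
\item For $m\gg1$, consider $m$-basis type divisors $D=\frac{1}{mN_m}\sum\{s_i=0\}$ and the invariant $\delta_m=\inf_D \mathrm{lct}(D)$. By Blum--Jonsson, $\delta_m\to\delta$.
\item Define the quantized analogue of the entropy inequality on the Bergman space $\mathcal H_m\cong GL(N_m)/U(N_m)$ of Fubini--Study potentials. The functional $\mathrm E_\omega$ is approximated by a quantized energy $\mathcal L_m$, given by $\log\det$ of the Hilbert matrix over $N_m$.
\item For a Fubini--Study potential $\varphi_{\mathbf s}$ in an orthonormal-up-to-diagonal basis $\mathbf s$, the integrand $e^{-\lambda\varphi}$ is comparable to $\prod_i|s_i|^{-2\lambda/(mN_m)}$. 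Its integrability for $\lambda<\delta_m$ is then exactly a log canonical threshold statement.
\item Upgrade this to a uniform bound over all bases, as the torus/Cartan decomposition parameter varies. The key tools are compactness of the space of bases (Grassmannian-type), lower semicontinuity of lct, and a uniform Skoda/Demailly--Koll\'ar openness estimate.
\item This gives a quantized coercivity estimate $\mathrm H_m\ge \lambda\,\mathcal L_m-D_m$ for every $\lambda<\delta_m$.
\item Pass from $\mathcal H_m$ to $\mathcal H_\omega$ using Bergman kernel asymptotics (Tian--Catlin--Zelditch) and the convergence $\mathcal L_m\to\mathrm E$ of Berman--Boucksom--Witt Nystr\"om.
\end{enumerate}
The delicate point is step 6: the constants $D_m$ must not blow up, or they must at least be absorbed by slightly decreasing $\lambda$. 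I would handle this by a convexity argument along Bergman geodesics. Letting $m\to\infty$ then gives $\delta^{\mathrm{an}}\ge\delta-\varepsilon$ for every $\varepsilon>0$.

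Finally, combining the two directions gives equality. Purely cohomological invariance of both sides, already noted above, lets us work with any convenient reference form. The extension to ample $\mathbb R$-line bundles then follows by continuity of both $\delta$ and $\delta^{\mathrm{an}}$ in $L$.
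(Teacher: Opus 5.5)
The paper does not prove this statement. It is quoted from Zhang, whose proof is a quantization argument along essentially the lines of your lower bound: $\delta_m\to\delta$ by Blum--Jonsson, a reduction of a quantized inequality on $\mathcal H_m$ to log canonical thresholds of $m$-basis type divisors, and then a passage from $\mathcal H_m$ to $\mathcal H_\omega$. Your upper bound is acceptable if you route it through test configurations (Boucksom--Hisamoto--Jonsson slope formulas plus Boucksom--Jonsson's characterization of $\delta$). Steps~1--5 are sound, but two normalizations need fixing.

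First, the Legendre dual of $\mathrm H\ge\lambda(\mathrm I-\mathrm J)-D$ is $\int_Y e^{-\lambda(\varphi-\mathrm E(\varphi))}\omega^n\le C$, with the Monge--Amp\`ere energy $\mathrm E$. This is because $(\mathrm I-\mathrm J)(\varphi)=\mathrm E(\varphi)-\frac1V\int_Y\varphi\,\omega_\varphi^n$. A $\sup\varphi$ normalization would give an $\alpha$-type invariant instead. Likewise, $\log\det$ of the Hilbert matrix approximates $\mathrm E$, not $\mathrm E_\omega=\mathrm I-\mathrm J$.

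Second, what Steps~3--5 actually yield is a quantized Moser--Trudinger bound, not an entropy inequality: $\int_Y e^{-\lambda(\varphi-\mathcal L_m(\varphi))}\omega^n\le C_{m,\lambda}$ on $\mathcal H_m$ for $\lambda<\delta_m$. It comes from the one-sided AM--GM estimate $\varphi-\mathcal L_m(\varphi)\ge\frac{1}{mN_m}\sum_i\log|s_i|^2_{h^m}$, not from a comparability.

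The genuine gap is Step~6, which is where the substance of the cited proof lies, and the tools you name do not supply it. You fix a single large $m$, so the size of $D_m$ is irrelevant; what is needed is a transfer estimate that is uniform in $\psi\in\mathcal H_\omega$. Tian--Catlin--Zelditch asymptotics and the Berman--Boucksom--Witt Nystr\"om convergence $\mathcal L_m(\psi)\to\mathrm E(\psi)$ are pointwise in $\psi$, so they cannot give this. If ``convexity along Bergman geodesics'' means convexity on one-parameter subgroups in $\mathcal H_m$, it points the wrong way. There $\mathcal L_m$ is affine and $\mathrm E$ is convex, so comparing slopes at the reference point bounds $\mathrm E$ from below by $\mathcal L_m$ on $\mathcal H_m$. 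That inequality serves $\delta^{\mathrm{an}}\le\liminf_m\delta_m$, not Step~6. For $\sup\psi=0$ and $\varphi_m:=\mathrm{FS}_m\circ\mathrm{Hilb}_m(\psi)$, you instead need
\[
\mathcal L_m(\varphi_m)\ \ge\ \mathrm E(\psi)-\epsilon_m\,\mathrm J(\psi)-C_m,\qquad \epsilon_m\to0 .
\]

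One way to get this is Berndtsson's positivity of direct images. The map $t\mapsto-\log\det\mathrm{Hilb}_m(\psi_t)$ is convex along the geodesic in $\mathcal H_\omega$ from $0$ to $\psi$, up to lower-order corrections from the twist by $K_Y$. Along that geodesic $\mathrm E$ is affine, and Bergman asymptotics are needed only at the fixed endpoint $0$. This gives $\epsilon_m=O(1/m)$ via $d_1(0,\psi)\le\mathrm J(\psi)+C$.

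Two further steps are then required. Because $\varphi_m$ approximates $\psi$ only from above, integrability must be transferred by H\"older's inequality together with the identity $\int_Y e^{m(\varphi_m-\psi)}\omega^n=N_m$. The error $\epsilon_m\mathrm J$ is then absorbed using $\mathrm J(t\psi)\le t^{1+1/n}\mathrm J(\psi)$, at the cost of replacing $\lambda$ by $(1+\epsilon_m)^{-n}\lambda$.

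Finally, your extension to $\mathbb R$-classes presupposes continuity of $\delta^{\mathrm{an}}$ in the class. That is not automatic and needs its own argument.
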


\noindent In particular, we shall henceforth write simply
\(\delta(Y,L)\) for either the algebraic or analytic threshold.

\subsection{Lefschetz pencils and fibrations}
\label{subsec:lefschetz}

Lefschetz fibrations are a holomorphic analogue of Morse functions, and arise from preliminary structures called Lefschetz pencils, see e.g. \cite{SGA7II, VoisinHodgeII} for background. We will restrict our study to Lefschetz pencils and fibrations on smooth complex projective varieties. 
To introduce them, let \(X\) be a smooth complex projective variety of dimension \(n\geq 2\),
and let \( H\) be a very ample line bundle on \(X\). 
A pencil in \(|H|\) is the linear subsystem
\[
    |W|:=\mathbb P(W)\subset|H|
\]
associated with a two-dimensional subspace \(W\subset H^0(X, H)\). Upon
choosing a basis \(s_0,s_1\) of \(W\), its members are
\[
    D_t:=\{t_0s_0+t_1s_1=0\},
    \qquad
    t=[t_0:t_1]\in\mathbb P^1,
\]
and its base locus is
\[
    Z_W:=\operatorname{Bs}|W|
        =\{s_0=s_1=0\}.
\]
The pencil defines a rational map
\[
    \phi_W:X\dashrightarrow\mathbb P^1,
    \qquad
    x\longmapsto[s_1(x):-s_0(x)],
\]
whose fiber over \(t\) is \(D_t\setminus Z_W\).

We say that \(|W|\) is a \emph{Lefschetz pencil} if \(Z_W\) is smooth
of codimension two and \(\phi_W\) has nondegenerate critical points
with distinct critical values. By the classical Lefschetz theorem, the
Lefschetz pencils form a nonempty Zariski-open subset
\[
    \mathcal U_{\mathrm{Lef}}
    \subset\operatorname{Gr}(2,H^0(X, H));
\]
see
\cite[Expos\'e~XVII, Th\'eor\`eme~2.5.2 and
Corollaire~3.2.1]{SGA7II}.

Fix \(W\in\mathcal U_{\mathrm{Lef}}\) and put \(Z:=Z_W\). The
incidence variety
\[
    \mathcal Y_W
    :=
    \{(x,t)\in X\times\mathbb P^1:x\in D_t\}
\]
is naturally isomorphic to the blowup of \(X\) along \(Z\), see
\cite[\S2.3.1]{VoisinHodgeII}. We therefore write
\(
    \pi: Y :=\operatorname{Bl}_Z X\longrightarrow X
\)
for the first projection and
\(
    f:Y\longrightarrow\mathbb P^1
\)
for the second. We thus have the following picture.
\begin{figure}[htbp]
\centering
\begin{tikzcd}[row sep=large, column sep=large]
& Y \arrow[dl, "\pi"'] \arrow[dr, "f"] & \\
X & & \mathbb{P}^1
\end{tikzcd}
\caption{The fibered geometry of the regularizing blowup.}
\label{fig:lefschetz_fibration}
\end{figure}
 The map \(f\) is called the \emph{Lefschetz
fibration} associated with the pencil. The fiber
\(Y_t:=f^{-1}(t)\) is the strict transform of \(D_t\), and
\[
    \pi|_{Y_t}:Y_t\xrightarrow{\ \simeq\ }D_t,
\]
since \(Z\subset D_t\) is a Cartier divisor.

Let \(E\subset Y\) denote the exceptional divisor, and let \(F\)
denote the divisor class of a fiber of \(f\). The following summarizes the only properties we need about the Lefschetz fibration and its base locus. Once they are given, the remaining argument is purely variational and cohomological.
\begin{prop}\label{prop:lefschetz-package}
The morphism
\(
    f:Y\longrightarrow\mathbb P^1
\)
is a contraction. Moreover,
\(
    F=\pi^*H-E.
\)
In particular, \(F\) is semiample and nef, and
\(
    F^2=0.
\)
Every fiber \(Y_t\) is reduced and satisfies
\[
    \operatorname{lct}(Y,Y_t)=1.
\]
\end{prop}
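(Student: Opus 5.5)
The plan is to verify the four claims in order, using the incidence-variety description $Y=\mathcal Y_W=\{(x,t):x\in D_t\}\subset X\times\mathbb P^1$ and the identification $\pi: Y\simeq\Bl_Z X$ recalled above.

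\emph{The class $F$.} The incidence variety is the zero locus in $X\times\mathbb P^1$ of the section $t_0s_0+t_1s_1$ of $\mathrm{pr}_1^*H\otimes\mathrm{pr}_2^*\mathcal O_{\mathbb P^1}(1)$. Pulling back $\mathcal O_{\mathbb P^1}(1)$ along $f$ gives the line bundle whose sections are the pencil members; equivalently $f$ is induced by the pencil after removing the base locus. On $Y=\Bl_ZX$ the pencil $\pi^*W\subset H^0(Y,\pi^*H)$ has fixed component exactly $E$, with multiplicity one. This holds because $Z=\{s_0=s_1=0\}$ is a smooth complete intersection, so $s_0,s_1$ generate the ideal of $Z$ and $\pi^{-1}\mathcal I_Z\cdot\mathcal O_Y=\mathcal O_Y(-E)$. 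Hence $\pi^*W=s_E\cdot W'$ with $W'\subset H^0(Y,\pi^*H-E)$ base-point free, and $W'$ defines $f$. This gives $F=f^*\mathcal O(1)=\pi^*H-E$. Semiampleness and nefness follow at once as pullbacks of an ample class, and $F^2=f^*(\mathcal O(1)^2)=0$ since $\dim\mathbb P^1=1$.

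\emph{Contraction.} I need $f_*\mathcal O_Y=\mathcal O_{\mathbb P^1}$. The fibers $Y_t\simeq D_t$ are connected, because $D_t$ is an ample divisor on the connected variety $X$ of dimension $\ge2$ (Lefschetz/Kodaira vanishing gives $H^0(\mathcal O_{D_t})=\mathbb C$). The map $f$ is also flat and surjective onto a normal curve. Stein factorization $Y\to C\to\mathbb P^1$ then has a finite map $C\to\mathbb P^1$ with connected fibers, so it is birational onto a normal curve and hence an isomorphism.

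\emph{Reducedness and the lct.} For a Lefschetz pencil each $D_t$ is either smooth or has a single ordinary double point (nondegenerate critical point), and $Y_t\simeq D_t$. So $Y_t$ is a reduced divisor on the smooth $n$-fold $Y$, with at worst an $A_1$ singularity $\sum_{i=1}^n z_i^2=0$. For the lct, the inequality $\operatorname{lct}(Y,Y_t)\le1$ is automatic, since $\ord_{Y_t}$ is a divisor with $A=1$ and $\ord_{Y_t}(Y_t)=1$. For the reverse inequality, the pair is log smooth away from the node. At the node, blowing up the point gives an exceptional divisor $G$ with $A_Y(G)=n$ and $\ord_G(Y_t)=2$, and the strict transform is a smooth quadric cone resolution meeting $G$ transversally, so this is a log resolution. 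Its discrepancy check requires $n/2\ge1$, i.e.\ $n\ge2$, which holds. Hence the pair $(Y,Y_t)$ is log canonical and $\operatorname{lct}=1$.

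The main obstacle is the first step: making sure $E$ appears with multiplicity exactly one in $\pi^*D_t$. This is what forces $F=\pi^*H-E$ rather than $\pi^*H-mE$. The smooth complete-intersection structure of $Z$ handles it, and I would cite \cite[\S2.3.1]{VoisinHodgeII} for the identification. The remaining claims are local and routine. In dimension $n=2$ the node computation gives exactly $\operatorname{lct}=1$, which is the borderline case, so the log-resolution check must be done carefully there.
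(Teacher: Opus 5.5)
Your proposal is correct and follows essentially the same route as the paper. Multiplicity one of $E$ in $\pi^*D_t$ (which you justify via $\pi^{-1}\mathcal I_Z\cdot\mathcal O_Y=\mathcal O_Y(-E)$) gives $F=\pi^*H-E$, connectedness of the members of $|H|$ plus Stein factorization gives the contraction, and the Morse normal form $z_1^2+\cdots+z_n^2$ gives reducedness and $\operatorname{lct}=\min\{1,n/2\}=1$. The differences are only cosmetic: you prove connectedness of every fiber by Kodaira vanishing rather than citing it for a smooth member, and you check the lct at the node by an explicit one-point blowup rather than quoting the known value for the quadric singularity.
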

\begin{proof}
A smooth member of \(|H|\) is connected, see
\cite[Corollary~III.7.9]{Hartshorne}. Hence the general fiber of
\(f\) is connected, and Stein factorization gives
\(
    f_*\mathcal O_{Y}\simeq\mathcal O_{\mathbb P^1}.
\)
Thus \(f\) is a contraction.
Since the base locus is cut out transversally by \(s_0\) and \(s_1\),
each member \(D_t\) of the pencil contains \(Z\) with multiplicity
one. Its total transform is therefore
\(
    \pi^*D_t=Y_t+E.
\)
As \(D_t\in|H|\) and \(Y_t\) represents \(F\), this gives
\(
    F=\pi^*H-E.
\)
The fibers form a basepoint-free pencil in the class \(F\). Hence
\(F\) is semiample and nef. Moreover, two distinct fibers are
disjoint, and hence
\(
    F^2=0.
\)

By the Lefschetz condition and the holomorphic Morse lemma, every
singular fiber has a unique singular point, near which there are
holomorphic coordinates \(z_1,\ldots,z_n\) on \(Y\) and a local
coordinate \(\tau\) on \(\mathbb P^1\) such that
\(
    \tau\circ f=z_1^2+\cdots+z_n^2.
\)
In particular, every fiber is reduced. 

Finally,
\[
    \operatorname{lct}_0
    \bigl(z_1^2+\cdots+z_n^2\bigr)
    =
    \min\left\{1,\frac n2\right\}
    =1
\]
for \(n\geq2\), see \cite[Example~1.8]{MustataLCT}. At every smooth
point of \(Y_t\), the local log canonical threshold is also equal to
\(1\), which proves the last assertion.
\end{proof}

\subsection{Hattori's adiabatic theorem for the delta invariant}
\label{subsec:hattori-delta}

The ordinary delta invariant recalled in subsection \ref{subsec:algdelta} is the
special case \(B=0\) of the delta invariant of a polarized log pair. For background on the valuative theory underlying this subsection, we
refer the reader to \cite{BlumJonssonThresholds, Hattori} and references therein.

Let \((Y,B)\) be a projective klt pair and let \(L\) be an ample
rational line bundle on \(Y\).
For a prime divisor \(G\) over \(Y\), choose a log resolution
\(
    \mu:Y'\longrightarrow Y
\)
of \((Y,B)\) on which \(G\) appears, and set
\[
    A_{(Y,B)}(G)
    :=
    1+\ord_G
    \bigl(
        K_{Y'}-\mu^*(K_Y+B)
    \bigr).
\]
The log delta invariant is
\[
    \delta_{(Y,B)}(L)
    :=
    \inf_{G\ \mathrm{prime\ over}\ Y}
    \frac{A_{(Y,B)}(G)}{S_L(G)}.
\]
In particular,
\[
    \delta_{(Y,0)}(L)=\delta(Y,L).
\]

Now let
\(
    f:Y\longrightarrow C
\)
be a projective \emph{contraction} from a smooth projective variety to a
smooth projective curve, i.e. a projective morphism such that $f_* \mathcal O_{Y} = \mathcal O_C$. For \(p\in C\), we denote by \(f^*p\) the
scheme-theoretic fiber and set
\[
    B_f
    :=
    \sum_{p\in C}
    \left(
       1-\operatorname{lct}(Y,f^*p)
    \right)p.
\]
This is the \emph{discriminant boundary} of \(f\).

We use the following smooth special case of Hattori's adiabatic
convergence theorem. The result applies to arbitrary polarized
algebraic fiber spaces over curves. In particular, no klt-triviality
assumption is imposed.

\begin{thm}[Hattori {\cite[Theorem~4.12]{Hattori}}]
\label{thm:hattori}
Let \(A\) be an ample rational class on \(Y\), and let \(L_C\) be an
ample rational class on \(C\). Then
\[
    \lim_{\epsilon\to0^+}
    \delta\bigl(Y,f^*L_C+\epsilon A\bigr)
    =
    \delta_{(C,B_f)}(L_C),
\]
where the limit is taken through positive rational values of
\(\epsilon\).
\end{thm}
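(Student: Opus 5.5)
The plan is to prove the two inequalities
\[
\limsup_{\epsilon\to0^+}\delta(Y,L_\epsilon)\leq\delta_{(C,B_f)}(L_C)
\quad\text{and}\quad
\liminf_{\epsilon\to0^+}\delta(Y,L_\epsilon)\geq\delta_{(C,B_f)}(L_C),
\qquad L_\epsilon:=f^*L_C+\epsilon A,
\]
separately. The starting observation is that the right-hand side is explicit. Prime divisors over the curve $C$ are just points $p\in C$. For such a point,
\[
A_{(C,B_f)}(p)=\operatorname{lct}(Y,f^*p),
\qquad
S_{L_C}(p)=\tfrac12\deg L_C .
\]
Hence $\delta_{(C,B_f)}(L_C)=\frac{2}{\deg L_C}\min_p\operatorname{lct}(Y,f^*p)$.

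\emph{Upper bound.} Fix a point $p\in C$ and a divisorial valuation $v$ over $Y$ computing $\operatorname{lct}(Y,f^*p)$, so that $A_Y(v)=\operatorname{lct}(Y,f^*p)\,v(f^*p)$. The first step is to show that
\[
S_{L_\epsilon}(v)\longrightarrow v(f^*p)\,S_{L_C}(p)\qquad\text{as }\epsilon\to0^+ .
\]
For the lower estimate on $S$, observe that subtracting $x f^*p$ from $f^*L_C$ costs exactly $v(f^*p)x$ in $v$-order. This gives
\[
\vol\bigl(L_\epsilon-x\,v(f^*p)\,f^*p\bigr)=\vol\bigl(f^*(L_C-xp)+\epsilon A\bigr),
\]
and after dividing by $L_\epsilon^n\sim n\epsilon^{n-1}\deg L_C\,(A|_F)^{n-1}$ the integral tends to $\frac{1}{\deg L_C}\int_0^{\deg L_C}(\deg L_C-x)\,dx$. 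The matching upper estimate follows from the same computation done on the model where $v$ appears, the correction being $O(\epsilon)$. Then $A_Y(v)/S_{L_\epsilon}(v)\to \operatorname{lct}(Y,f^*p)/S_{L_C}(p)$, and minimizing over $p$ gives the $\limsup$ inequality.

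\emph{Lower bound.} I would split valuations $v$ on $Y$ into two classes.

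\emph{Horizontal valuations} are those whose centre dominates $C$. For these, sections of $mL_\epsilon$ are essentially sections of $m\epsilon A$ along the fibres. So I expect $S_{L_\epsilon}(v)\leq C_0\,\epsilon\, T_A(v)$, where $T_A$ is the maximal vanishing order with respect to $A$. On the other hand $A_Y(v)\geq \alpha(Y,A)\,T_A(v)$ by the Tian/Ambro-type $\alpha$-invariant bound. Hence the ratio $A_Y(v)/S_{L_\epsilon}(v)$ tends to $\infty$ uniformly in $v$.

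\emph{Vertical valuations} are those whose centre lies in a fibre $f^{-1}(p)$. Put $c=v(f^*p)$. The key estimate to aim for is the uniform bound
\[
S_{L_\epsilon}(v)\leq c\,S_{L_C}(p)+C_1\,\epsilon\,T_A(v).
\]
This would be obtained from the filtration of $H^0(mL_\epsilon)$ by order of vanishing along $f^*p$: since $L_\epsilon$ is nearly pulled back from $C$, the $v$-order beyond $c\cdot\ord_p$ can only come from the $\epsilon A$ part. One can make this precise with Okounkov bodies adapted to a flag through the fibre, or via the Boucksom--Chen concave transform. Combining with the two lower bounds
\[
A_Y(v)\geq \operatorname{lct}(Y,f^*p)\,c
\qquad\text{and}\qquad
A_Y(v)\geq \alpha\, T_A(v),
\]
and taking a convex combination of them, gives
\[
\frac{A_Y(v)}{S_{L_\epsilon}(v)}\geq \frac{\operatorname{lct}(Y,f^*p)}{S_{L_C}(p)}-o(1),
\]
uniformly in $v$. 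Only finitely many fibres have $\operatorname{lct}<1$, and the generic fibre is treated uniformly, so the minimum over $p$ is attained.

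The \textbf{main obstacle} is the uniform vertical estimate $S_{L_\epsilon}(v)\leq c\,S_{L_C}(p)+O(\epsilon)T_A(v)$. It requires controlling the volume function $x\mapsto\vol(L_\epsilon-xv)$ uniformly over all vertical valuations near the adiabatic limit, including behaviour near singular fibres, while renormalizing by $L_\epsilon^n=O(\epsilon^{n-1})$. My first attempt would reduce to a relative statement via the Harder--Narasimhan filtration of $f_*\mathcal O(m\epsilon A)$ on the curve $C$, in the spirit of Hattori's argument.
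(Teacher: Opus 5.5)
The paper does not prove this statement; it quotes it from Hattori. Your $\limsup$ half is sound once the displayed identity is corrected. With $c=v(f^*p)$ one has $\vol(L_\epsilon-xv)\ge\vol(L_\epsilon-\tfrac{x}{c}f^*p)$, which gives $\liminf_{\epsilon\to0^+}S_{L_\epsilon}(v)\ge c\deg L_C/2$, and that is all this direction needs.

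The $\liminf$ half has a genuine gap. It rests on two estimates phrased through the global invariant $T_A(v)$, and both fail uniformly in $v$. Take $Y=\mathbb P^1\times\mathbb P^1$, $f=\mathrm{pr}_1$, $A=\mathcal O(1,1)$ and $L_C=\mathcal O(1)$, so that $L_\epsilon=\mathcal O(1+\epsilon,\epsilon)$. Let $\Gamma_k\in|\mathcal O(k,1)|$ be the graph of a degree $k$ map $C\to\mathbb P^1$.
\begin{itemize}
\item $\ord_{\Gamma_k}$ is horizontal with $T_A(\ord_{\Gamma_k})=1/k$.
\item $T_{L_\epsilon}(\ord_{\Gamma_k})=\min\{\epsilon,(1+\epsilon)/k\}$.
\item For $k\ge 1+1/\epsilon$, the bound $S\ge T/(n+1)$ gives $S_{L_\epsilon}(\ord_{\Gamma_k})\ge\tfrac{1+\epsilon}{3}T_A(\ord_{\Gamma_k})$.
\end{itemize}
So no inequality $S_{L_\epsilon}(v)\le C_0\epsilon T_A(v)$ can hold. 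Your lower bound $\alpha T_A(v)/S_{L_\epsilon}(v)$ is then at most $3\alpha$ and does not tend to $\infty$. The reason is that sections of $L_\epsilon$ absorb the horizontal twisting of the centre through the factor $f^*L_C$, whereas $T_A$ is penalized by it.

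The horizontal case can be repaired on the generic fibre. One has $T_{L_\epsilon}(v)\le\epsilon\,T_{A_\eta}(v)$ and $A_Y(v)\ge\alpha(Y_\eta,A_\eta)\,T_{A_\eta}(v)$. In the example $T_{A_\eta}(\ord_{\Gamma_k})=1$ for every $k$.

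The vertical estimate has the same defect, and it is also the step you leave unproved. Choose a point $y\in\Gamma_k\cap f^{-1}(p)$ and a local parameter $\tau$ at $p$. Let $v$ be the monomial valuation at $y$ with weight $1$ on a local equation of $\Gamma_k$ and weight $c=\epsilon^2$ on $f^*\tau$.
\begin{itemize}
\item $v$ is vertical and $v\ge\ord_{\Gamma_k}$, so for $k\ge1+1/\epsilon$ one gets $S_{L_\epsilon}(v)\ge(1+\epsilon)/(3k)$.
\item Restricting sections to $\Gamma_k$ gives $T_A(v)\le 1/k+c(k+1)$.
\item With $k\approx 2/\epsilon$, the left side of your key inequality is of order $\epsilon$, while $c\,S_{L_C}(p)+C_1\epsilon T_A(v)=O(\epsilon^2)$.
\end{itemize}

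So the error term must also be insensitive to twisting by pull-backs from $C$, for instance controlled by $\epsilon A_Y(v)$. Proving such a bound uniformly over all vertical valuations and all fibres, including singular ones, with constants independent of $p$, is the actual content of Hattori's theorem. Your proposal only gestures at this through Okounkov bodies and the Harder--Narasimhan filtration of $f_*\mathcal O(m\epsilon A)$. As it stands, the lower bound, and hence the theorem, is not established.
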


The right-hand side is the ordinary log delta invariant of the
polarized pair \((C,B_f,L_C)\). If
\[
    B=\sum_{p\in C}b_p p,
\]
then every divisorial valuation of \(C\) is a positive multiple of a
point valuation, and we have $A_{(C,B)}(\ord_p)=1-b_p$ and
\[
    S_{L_C}(\ord_p)=\frac{\deg L_C}{2}.
\]
Consequently,
\[
    \delta_{(C,B)}(L_C)
    =
    \frac{2}{\deg L_C}
    \min_{p\in C}(1-b_p),
\]
see \cite[Example~2.17]{Hattori}. Equivalently,
Theorem~\ref{thm:hattori} gives
\[
    \lim_{\epsilon\to0^+}
    \delta\bigl(Y,f^*L_C+\epsilon A\bigr)
    =
    \frac{2}{\deg L_C}
    \min_{p\in C}
    \operatorname{lct}(Y,f^*p).
\]

\begin{cor}\label{cor:hattori-lefschetz}
Let
\(
    f:Y\longrightarrow\mathbb P^1
\)
be the Lefschetz fibration constructed in subsection \ref{subsec:lefschetz}, and let
\(F\) denote its fiber class. Then \(B_f=0\), and for every ample
rational class \(A\) on \(Y\),
\[
    \lim_{\epsilon\to0^+}
    \delta\bigl(Y,F+\epsilon A\bigr)
    =
    2.
\]
\end{cor}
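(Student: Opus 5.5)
The plan is to combine Proposition~\ref{prop:lefschetz-package} with Hattori's Theorem~\ref{thm:hattori}, applied with $C=\mathbb P^1$ and $L_C=c_1(\mathcal O_{\mathbb P^1}(1))$.

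\textbf{Vanishing of the discriminant boundary.} Proposition~\ref{prop:lefschetz-package} shows that $f$ is a contraction, so Theorem~\ref{thm:hattori} applies. For every $p\in\mathbb P^1$, the scheme-theoretic fiber $f^*p$ is the reduced fiber $Y_p$, since all fibers are reduced. The same proposition gives $\operatorname{lct}(Y,Y_p)=1$, so every coefficient $1-\operatorname{lct}(Y,f^*p)$ of $B_f$ vanishes and $B_f=0$.

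Two points need care here. The first is that $f^*p$, taken scheme-theoretically, really coincides with the reduced divisor $Y_p$. This follows from reducedness: near the unique critical point of a singular fiber the local equation is $z_1^2+\cdots+z_n^2$, which is reduced for $n\ge 2$. The second is that the lct of a reduced divisor is at most $1$, and it is attained at smooth points of the fiber. This matches what the proposition already records.

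\textbf{Evaluating the limit.} With $B_f=0$ and $\deg L_C=1$, the explicit formula recorded after Theorem~\ref{thm:hattori} gives
\[
\delta_{(\mathbb P^1,0)}(L_C)=\frac{2}{\deg L_C}\min_{p}(1-0)=2.
\]
Since $f^*L_C=F$, Hattori's theorem yields
\[
\lim_{\epsilon\to0^+}\delta(Y,F+\epsilon A)=2
\]
for every ample rational class $A$.

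No serious obstacle is expected. The only subtle point is the identification of the scheme-theoretic fibers with the reduced fibers, which is handled by the local normal form above; everything else is a direct substitution into previously stated results.
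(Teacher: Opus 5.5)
Your proposal is correct and follows the paper's proof: you use Proposition~\ref{prop:lefschetz-package} (reduced fibers with $\operatorname{lct}=1$) to get $B_f=0$, and then apply Theorem~\ref{thm:hattori} with $L_C=\mathcal O_{\mathbb P^1}(1)$ and $\deg L_C=1$ to obtain the limit $2$. Your extra remarks identifying the scheme-theoretic fibers with the reduced fibers are already contained in that proposition, so they add care but no new ingredient.
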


\begin{proof}
By Proposition~2.8, every fiber has log-canonical threshold one, and
hence \(B_f=0\). Apply Theorem~\ref{thm:hattori} with $L_C=\mathcal O_{\mathbb P^1}(1)$ and $\deg L_C=1$.
\end{proof}

\medskip

\section{Destabilizing subvarieties for fibrations over curves}

\noindent Let $Y$ be a smooth complex projective variety of dimension $n \geq 2$. Let $L$ be a K\"ahler class,
and $\beta\in H^{1,1}(Y,\mathbb R)$.  

In this section we introduce one of the new elements of our proof: it is possible to exactly compute the slope threshold
$\mathcal E_\beta(L)$ for certain types of fibrations over curves. To do this, we study the notion of \emph{optimally destabilizing subvarieties}, building on ideas from \cite{KSD1,KSD2}. We say that a proper irreducible subvariety $V \subset Y$ of dimension $p \in \{1,2,\dots, n-1\}$ is \emph{optimally destabilizing} if it computes the slope threshold, i.e.
\newline
$$
\mathcal E_\beta(L) = \frac{
  \left(
  n\dfrac{\beta\cdot L^{n-1}}{L^n}L-p\beta
  \right)\cdot L^{p-1}\cdot V
  }{(n-p)L^p\cdot V}.
$$

\medskip

\subsection{Optimally destabilizing subvarieties for the slope threshold } \label{subsec: opt dest subvar}

Inspired by \cite{KSD1} and \cite{KSD2} we consider the class 
$$
D_{Y} := t_{\mathrm{nef}}L - K_{Y},
$$
where
$$
t_{\mathrm{nef}} = \inf\{s\in\mathbb R:sL-K_{Y}\ \text{is nef}\}.
$$
In the case when this class is nef and big, it is known for surfaces (see 
\cite{KSD1,KSD2}) that any optimally destabilizing subvariety must be contained in the non-K\"ahler locus $E_{nK}(D_{Y})$.
%
%
Going beyond the case of surfaces, the situation becomes more complicated, and in general, including many of the most interesting applications, $D_{Y}$ will not be big. Then the non-K\"ahler locus is all of $Y$ and such an approach would not provide any useful information.

In this note we therefore make the following observation, which does not depend on $K_{Y}$ being sufficiently positive:

\begin{prop}[Localization by a mixed-null divisor]
\label{prop:mixed-null-divisor}
Let \(L\) be a K\"ahler class on a smooth projective \(n\)-fold \({Y}\) and let
\(\beta\in H^{1,1}(Y, \mathbb R)\) be a real class. Define
\[
  t:=\inf\{s\in\mathbb R:sL-\beta\ \text{is nef}\},
\]
and
\[
  D:=tL-\beta.
\]
Since the nef cone is closed, $D$ is nef.
Assume moreover that \(D\not\equiv0\) and that there exists a prime divisor
\(W\subset {Y}\) such that
\(
  D\cdot L^{n-2}\cdot W=0.
\)
Then the subvarieties computing
\(\mathcal E_{\beta}(L)\) are precisely the prime divisors
\(V\subset {Y}\) satisfying
\(
  D\cdot L^{n-2}\cdot V=0.
\)
Finally, we then have the explicit formula
\[
  \mathcal E_{\beta}(L)
  =
  t-n\frac{D\cdot L^{n-1}}{L^n}.
\]
\end{prop}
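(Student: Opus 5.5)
The proof is a direct computation: I will rewrite the slope $\mathcal E_{\beta,L}(V)$ in terms of the nef class $D=tL-\beta$, and then minimise over $p$ and $V$ using only nefness of $D$.

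\textbf{Step 1 (substitution).} Substitute $\beta=tL-D$ into Definition~\ref{def:pp-threshold}. Set
\[
c:=n\frac{D\cdot L^{n-1}}{L^n},
\]
so that $n\frac{\beta\cdot L^{n-1}}{L^n}=nt-c$. The numerator of $\mathcal E_{\beta,L}(V)$ then becomes
\[
(nt-c)\,L^p\cdot V-pt\,L^p\cdot V+p\,D\cdot L^{p-1}\cdot V .
\]
Dividing by $(n-p)L^p\cdot V$ gives, for every irreducible $V$ of dimension $p$,
\[
\mathcal E_{\beta,L}(V)=t-\frac{c}{n-p}+\frac{p}{n-p}\,\frac{D\cdot L^{p-1}\cdot V}{L^p\cdot V}.
\]
This is consistent with Lemma~\ref{lem magic J-formula}.

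\textbf{Step 2 (sign information).} I need two facts.

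First, $D\cdot L^{p-1}\cdot V\ge 0$ for every $V$. This holds because the restriction $D|_V$ is nef and $L|_V$ is K\"ahler, so the intersection number is a limit of intersections of K\"ahler classes on $V$.

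Second, $c>0$. Nonnegativity follows in the same way. Strict positivity uses $D\not\equiv0$: a nonzero nef class has $D\cdot L^{n-1}>0$ for $L$ K\"ahler. I would justify this via the Hodge index theorem / Hodge--Riemann bilinear relations. Indeed, if $D\cdot L^{n-1}=0$, then $D$ is $L$-primitive, so $D^2\cdot L^{n-2}\le 0$ with equality only if $D\equiv 0$. On the other hand $D^2\cdot L^{n-2}\ge 0$ by nefness. Together these force $D\equiv0$, a contradiction.

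I expect this positivity of $c$ to be the only step needing real input.

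\textbf{Step 3 (minimisation).} By Steps 1 and 2, $\mathcal E_{\beta,L}(V)\ge t-\frac{c}{n-p}$ for every $V$ of dimension $p$.

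Since $c>0$, the function $p\mapsto t-\frac{c}{n-p}$ is strictly decreasing in $p$. For $p\le n-2$ we therefore get the strict bound
\[
\mathcal E_{\beta,L}(V)\ge t-\frac{c}{n-p}>t-c .
\]
For $p=n-1$ we get the exact formula
\[
\mathcal E_{\beta,L}(V)=t-c+(n-1)\frac{D\cdot L^{n-2}\cdot V}{L^{n-1}\cdot V}\ge t-c,
\]
with equality if and only if $D\cdot L^{n-2}\cdot V=0$.

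The hypothesised prime divisor $W$ attains this value, so the infimum is attained and equals
\[
\mathcal E_\beta(L)=t-n\frac{D\cdot L^{n-1}}{L^n}.
\]
By the strict inequalities above, the subvarieties computing the infimum are exactly the prime divisors $V$ with $D\cdot L^{n-2}\cdot V=0$.
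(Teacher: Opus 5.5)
Your proposal is correct and is essentially the paper's own proof. The paper makes the same substitution $\beta=tL-D$ to get $\mathcal E_{\beta,L}(V)=t-\frac{c}{n-p}+\frac{p}{n-p}\frac{D\cdot L^{p-1}\cdot V}{L^p\cdot V}$, then uses the same two-step comparison (nefness makes the last term nonnegative, and $\frac{c}{n-p}\le c$ with equality only when $p=n-1$) to identify the minimizers. The only addition is your Hodge--Riemann argument for $D\cdot L^{n-1}>0$, which is valid and justifies a step the paper simply asserts.
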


\begin{proof}
For an irreducible \(p\)-dimensional subvariety \(V\subset {Y}\), consider the slope quantities
\[
  r_p(V):=
  \frac{D\cdot L^{p-1}\cdot V}{L^p\cdot V},
  \qquad
  r_n:=
  \frac{D\cdot L^{n-1}}{L^n}.
\]
Since \(D\) is nef, \(r_p(V)\geq0\).  Moreover, recall the shorthand
$$
\mathcal E_{\beta,L}(V) := \frac{
  \left(
  n\dfrac{\beta\cdot L^{n-1}}{L^n}L-p\beta
  \right)\cdot L^{p-1}\cdot V
  }{(n-p)L^p\cdot V}
$$
Since $\beta = tL - D$, one then computes
\[
  \mathcal E_{\beta,L}(V)
  =
  t-\frac{nr_n-pr_p(V)}{n-p}
\]
and here
\[
  \frac{nr_n-pr_p(V)}{n-p}
  \leq
  \frac{nr_n}{n-p}
  \leq nr_n.
\]
Since $D$ is a nonzero nef class and $L$ is K\"ahler, one has $D.L^{n-1} > 0$. Hence 
\(r_n>0\).  Equality in both inequalities holds
if and only if $p=n-1$ and $
  D\cdot L^{n-2}\cdot V=0.
$
The assumed divisor \(W\) shows that equality is attained, and the conclusion follows. 
\end{proof}

\medskip

\subsection{Localization of optimal destabilizers for fibrations over curves}

Fibrations over curves provide a natural source of divisors satisfying
the mixed-null condition in Proposition \ref{prop:mixed-null-divisor}, and hence of optimally
destabilizing divisors. 
A prime divisor \(V\subset Y\) is called vertical if \(f(V)\) is a
point, and horizontal if \(f(V)=C\).

\begin{cor}[Fibrations over curves]
\label{cor:mixed-null-fibration}
Let
\(
f:Y\longrightarrow C
\)
be a surjective morphism with connected fibers onto a smooth
projective curve. Let $L$ be a K\"ahler class and let
$\beta\in H^{1,1}(Y,\mathbb R)$. Suppose that the corresponding
nef-threshold class is of the form
\[
D=f^*A_C
\]
for a K\"ahler class $A_C$ on $C$. Then the subvarieties computing
\(\mathcal E_\beta(L)\) are precisely the vertical prime
divisors. In particular, every smooth fiber computes
\(\mathcal E_\beta(L)\).
\end{cor}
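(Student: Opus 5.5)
The plan is to reduce Corollary~\ref{cor:mixed-null-fibration} directly to Proposition~\ref{prop:mixed-null-divisor}. Set $D=f^*A_C$, which is the nef-threshold class $tL-\beta$ by hypothesis. First we check the non-degeneracy hypothesis $D\not\equiv 0$. Since $A_C$ is K\"ahler on the curve $C$, it has positive degree. Since $f$ is surjective, $f^*A_C\cdot L^{n-1}=\deg(A_C)\,(F\cdot L^{n-1})>0$, where $F$ is the fiber class. Hence $D\not\equiv0$.

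Next we compute $D\cdot L^{n-2}\cdot V$ for an arbitrary prime divisor $V\subset Y$.

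\emph{Vertical case.} If $V$ is vertical, then $V$ lies in a fiber $f^{-1}(p)$. The class $f^*A_C$ is represented by a multiple of a fiber, which can be taken to be a fiber over a point $q\neq p$ and is therefore disjoint from $V$. Equivalently, $f^*A_C|_V=(f|_V)^*(A_C|_{\{p\}})=0$. Either way, $D\cdot L^{n-2}\cdot V=0$. The existence of such a $V$, for instance any fiber component, gives the divisor $W$ required by Proposition~\ref{prop:mixed-null-divisor}.

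\emph{Horizontal case.} If $V$ is horizontal, then $f|_V:V\to C$ is surjective. We get $D\cdot L^{n-2}\cdot V=\deg(A_C)\,(F|_V\cdot L^{n-2}|_V)$, where $F|_V$ is the class of a general fiber of $f|_V$. That fiber is a nonempty effective cycle of dimension $n-2$ in $V$. Pairing it with the K\"ahler class $L^{n-2}$ gives a positive number, so $D\cdot L^{n-2}\cdot V>0$.

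Proposition~\ref{prop:mixed-null-divisor} now says the subvarieties computing $\mathcal E_\beta(L)$ are exactly the prime divisors with $D\cdot L^{n-2}\cdot V=0$. By the two cases, these are exactly the vertical prime divisors.

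For the final claim, let $F_p=f^{-1}(p)$ be a smooth fiber. Because $f$ has connected fibers, $F_p$ is connected, and a smooth connected variety is irreducible. So $F_p$ is a vertical prime divisor and computes $\mathcal E_\beta(L)$.

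The only mildly delicate step is the strict positivity in the horizontal case. I would handle it by the projection formula on a resolution of $V$, or by noting that $F\cdot L^{n-2}\cdot V$ is the $L$-degree of the $(n-2)$-cycle $F\cap V$, which is nonzero because $V$ meets every fiber properly. In dimension $n=2$ this reduces to $f^*A_C\cdot V=\deg(A_C)\deg(f|_V)>0$.
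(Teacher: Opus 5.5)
Your proposal is correct and follows the paper's proof: you reduce to Proposition~\ref{prop:mixed-null-divisor} by showing $D\cdot L^{n-2}\cdot V=0$ for vertical prime divisors and $D\cdot L^{n-2}\cdot V>0$ for horizontal ones, and you use connectedness plus smoothness to see that a smooth fiber is irreducible. You are slightly more careful than the paper, since you explicitly verify the hypotheses $D\not\equiv 0$ and the existence of the divisor $W$, which the paper leaves implicit.
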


\begin{proof}
A vertical prime divisor \(V\) is an irreducible component of a fiber,
and hence
\(
    D|_V=0.
\)
Thus
\(
    D\cdot L^{n-2}\cdot V=0.
\)
On the other hand, if \(V\) is horizontal, then positivity of \(A_C\)
and \(L\), together with the projection formula, gives
\[
    D\cdot L^{n-2}\cdot V>0.
\]
The first assertion therefore follows from
Proposition~\ref{prop:mixed-null-divisor}.

Finally, a smooth fiber is connected by assumption. Since a smooth
variety has disjoint irreducible components, a connected smooth fiber
is irreducible, and hence is itself a vertical prime divisor.
\end{proof}

\noindent In the following section the strategy is therefore to show that we can choose a general and sufficiently positive Lefschetz fibration $f: Y \rightarrow \mathbb P^1$ such that $D$ is a positive multiple of the fiber class $f^*(c_1(\mathcal O_{\mathbb P^1}(1)))$. Then Corollary \ref{cor:mixed-null-fibration} applies and $\mathcal E_\beta (L)$ can be computed exactly by the formula from Proposition \ref{prop:mixed-null-divisor}.

\medskip

\section{Existence of cscK metrics on Lefschetz blowups}
\label{sec:main-proof}

\noindent We now prove Theorems \ref{cor main intro} and \ref{thm main intro} by applying the results of the preceding sections to the Lefschetz
fibration associated with a sufficiently positive pencil. To lighten the
notation, throughout this section we write
\[
    \pi:Y:=\Bl_ZX\longrightarrow X
\]
for the Lefschetz blowup denoted by \(\widetilde X\) in the
introduction. Throughout this section we identify divisors and line bundles with
their first Chern classes whenever they occur in cohomological
expressions. In particular, all expressions involving the real parameter
$\varepsilon$ are understood in $H^{1,1}(Y,\mathbb R)$.

\subsection{Statement of the main result}
\label{subsec:main-statement}

Let \(X\) be a smooth complex projective variety of dimension
\(n\geq2\), and let \(A\) be an ample line bundle on \(X\). Choose an integer
\(N \geq 1\) such that
\(
    H_N:=NA
\)
is very ample and
\[
    B_N:=K_X+H_N
\]
is ample. Such integers exist, since every sufficiently large $N$ has these properties. Choose moreover a general Lefschetz pencil
\[
    \{D_t\}_{t\in\mathbb P^1}\subset |H_N|
\]
with smooth codimension-two base locus \(Z\subset X\), and let
\[
    \pi:Y:=\Bl_ZX\longrightarrow X,
    \qquad
    f:Y\longrightarrow\mathbb P^1
\]
be the associated blowup and Lefschetz fibration. Denote by \(E\)
the exceptional divisor and by \(F := f^*(c_1(\mathcal O_{\mathbb P^1}(1)))\) the common divisor class of the
fibers of \(f\). By Proposition~\ref{prop:lefschetz-package},
\[
    F=\pi^*H_N-E.
\]
Set
\[
\Theta_N:=c_1(K_{Y/\mathbb P^1})=K_Y+2F,
\qquad
L_{N,\varepsilon}:=F+\varepsilon\Theta_N.
\]
The dependence of \(Y,Z,E,f\), and \(F\) on \(N\) will be left implicit, and suppressed
from the notation. We now aim to prove: 

\begin{thm}\label{thm:main}
In the setting above, the relative canonical class
\(\Theta_N\) is K\"ahler on $Y$. Moreover, there exists
\(\epsilon_0>0\) such that, for every rational number
\(
    0<\epsilon<\epsilon_0,
\)
the Mabuchi functional is coercive in the class
\(L_{N,\epsilon}\). In particular,
\(L_{N,\epsilon}\) contains a cscK metric.
\end{thm}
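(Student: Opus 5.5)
The plan is to verify the criterion of Proposition~\ref{prop:energy-entropy} for $L=L_{N,\epsilon}$ and $\beta=K_Y$. The energy threshold should be bounded below by $-2+(\text{positive constant independent of }\epsilon)$, using Corollary~\ref{cor:mixed-null-fibration} together with Gao Chen's Theorem~\ref{thm:GaoChen}. The entropy term should tend to $2$, by Corollary~\ref{cor:hattori-lefschetz}.

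\emph{Step 1: $\Theta_N$ is ample.} Since $Z$ is smooth of codimension two, $K_Y=\pi^*K_X+E$. Using $F=\pi^*H_N-E$ this gives
\[
\Theta_N=K_Y+2F=\pi^*K_X+E+2\pi^*H_N-2E=\pi^*B_N+(\pi^*H_N-E).
\]
The class $\pi^*H_N-E=F$ is globally generated, and $-E$ is $\pi$-ample, so $\pi^*mH_N-E$ is ample for $m\gg1$. A convex combination of the nef class $\pi^*H_N-E$ with such a class shows that $\pi^*(sH_N)-E$ is ample for every $s>1$. We then write
\[
\Theta_N=\pi^*(B_N-\eta H_N)+\bigl(\pi^*((1+\eta)H_N)-E\bigr),
\]
where $\eta>0$ is small enough that $B_N-\eta H_N$ stays ample. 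This exhibits $\Theta_N$ as nef plus ample, hence ample. In particular, $L_{N,\epsilon}$ is K\"ahler for every $\epsilon>0$.

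\emph{Step 2: exact slope of $\Theta$.} We write $\Theta=\Theta_N$ and $L=L_{N,\epsilon}$, and compute $t=\inf\{s: sL-\Theta \text{ nef}\}$. We have $sL-\Theta=sF+(s\epsilon-1)\Theta$. This is nef for $s\geq 1/\epsilon$. For $s<1/\epsilon$ it fails to be nef, as one sees by testing on a curve contained in a fiber, where $F\cdot C=0$ and $\Theta\cdot C>0$. Hence $t=1/\epsilon$ and $D=\epsilon^{-1}F=f^*(\epsilon^{-1}\mathcal O(1))$. Corollary~\ref{cor:mixed-null-fibration} and Proposition~\ref{prop:mixed-null-divisor} then give
\[
\mathcal E_\Theta(L)=\frac1\epsilon\Bigl(1-n\frac{F\cdot L^{n-1}}{L^n}\Bigr).
\]
Since $F^2=0$, we have $L^n=n\epsilon^{n-1}a+\epsilon^n\Theta^n$ and $F\cdot L^{n-1}=\epsilon^{n-1}a$, where $a:=F\cdot\Theta^{n-1}>0$. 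Therefore
\[
\mathcal E_\Theta(L)=\frac{\Theta^n}{na+\epsilon\Theta^n}\longrightarrow \frac{\Theta^n}{na}>0 .
\]
So $\mathcal E_\Theta(L)\geq c:=\Theta^n/(2na)$ for $\epsilon$ small.

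\emph{Step 3: uniform energy bound (the main obstacle).} Gao Chen's theorem only yields positivity, not a quantitative bound, so I shift by $L$ while keeping the twisting class K\"ahler. By openness of the ample cone, there is $a_0>0$ with $\Theta-a_0F$ ample. For $0<\lambda\leq\min(a_0,c/2)$ the class
\[
\Theta-\lambda L=(1-\lambda\epsilon)\Theta-\lambda F
\]
is then K\"ahler for all small $\epsilon$; indeed it is a positive multiple of $\Theta-\lambda(1-\lambda\epsilon)^{-1}F$, and for $\epsilon$ small $\lambda(1-\lambda\epsilon)^{-1}<a_0$. By Lemma~\ref{lem magic J-formula},
\[
\mathcal E_{\Theta-\lambda L}(L)=\mathcal E_\Theta(L)-\lambda\geq c/2>0,
\]
so Theorem~\ref{thm:GaoChen} gives $\Gamma^{\mathrm{pp}}_{\Theta-\lambda L}(L)>0$, that is, $\Gamma^{\mathrm{pp}}_\Theta(L)>\lambda$ uniformly in $\epsilon$. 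Next, $K_Y=\Theta-2F=(1+2\epsilon)\Theta-2L$. Using positive homogeneity of $\Gamma^{\mathrm{pp}}_\beta$ in $\beta$ (the twisted energy is linear in $\beta$) together with the affine rule, we get
\[
\Gamma^{\mathrm{pp}}_{K_Y}(L)\geq(1+2\epsilon)\lambda-2 .
\]

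\emph{Step 4: conclusion.} Corollary~\ref{cor:hattori-lefschetz} with $A=\Theta$ gives $\delta(Y,F+\epsilon\Theta)\to2$ as $\epsilon\to0^+$ through rationals. Hence
\[
\Gamma^{\mathrm{pp}}_{K_Y}(L)+\delta(Y,L)\geq\lambda-2+\delta(Y,L)>0
\]
for $0<\epsilon<\epsilon_0$. Proposition~\ref{prop:energy-entropy}, together with Zhang's identification of the analytic and algebraic delta invariants, then yields coercivity of the Mabuchi functional and a cscK metric in $L_{N,\epsilon}$.

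The step I expect to be most delicate is Step 3: upgrading Gao Chen's qualitative equivalence into a bound on $\Gamma^{\mathrm{pp}}_\Theta(L)$ that is uniform as $\epsilon\to0$. Hattori's limit is exactly $2$, which only just cancels the $-2$ coming from $K_Y=(1+2\epsilon)\Theta-2L$, so the strictly positive margin $\lambda$ is what carries the argument.
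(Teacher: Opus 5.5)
Your proof is correct. It uses the same three ingredients as the paper: the localization of Proposition~\ref{prop:mixed-null-divisor} and Corollary~\ref{cor:mixed-null-fibration}, Gao Chen's Theorem~\ref{thm:GaoChen}, and Hattori's limit. The difference is which K\"ahler twisting class you feed into Gao Chen.

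The paper applies Theorem~\ref{thm:GaoChen} directly to $\beta=K_Y+L=\pi^*B_N+\epsilon\Theta_N$. This class is K\"ahler for every $\epsilon>0$ because $K_Y+F=\pi^*B_N$ is nef. Its slope is exactly $(B_N^n+\epsilon\Theta_N^n)/(nH_N\cdot B_N^{n-1}+\epsilon\Theta_N^n)>0$, which gives $\Gamma^{\mathrm{pp}}_{K_Y}(L)>-1$. So the paper only needs $\delta(Y,L)>1$ from Hattori.

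Since $K_Y+L=(1+2\epsilon)\bigl(\Theta_N-(1+2\epsilon)^{-1}L\bigr)$, the paper's choice is your Step~3 with $\lambda=(1+2\epsilon)^{-1}\approx 1$. This large margin is available precisely because $\Theta_N-F=\pi^*B_N$ is nef and $\Theta_N^n-nF\cdot\Theta_N^{n-1}=B_N^n>0$.

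You use only openness of the ample cone around $\Theta_N$. That yields a small $\lambda$ and forces you to spend almost all of Hattori's limit, since you need $\delta>2-\lambda$. This is harmless, because the admissible range of $\epsilon$ is non-effective anyway.

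What each route buys:
\begin{itemize}
\item After Step~1 your argument never uses $K_Y+F=\pi^*B_N$. It therefore goes through for any contraction $Y\to\mathbb P^1$ whose fibers all satisfy $\operatorname{lct}(Y,Y_t)=1$ and whose relative canonical class is ample.
\item The paper's route gives the quantitatively stronger bound $\Gamma^{\mathrm{pp}}_{K_Y}(L)>-1$. It also needs only the affine shift rule, not positive homogeneity of $\Gamma^{\mathrm{pp}}$ in $\beta$; that homogeneity is immediate from linearity of $\mathrm E_\beta$, but it is not stated in the paper.
\end{itemize}

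Your Step~1, the nef-plus-ample decomposition, is a valid substitute for the paper's argument. The paper instead restricts $\mathrm{pr}_X^*B_N+\mathrm{pr}_{\mathbb P^1}^*c_1(\mathcal O_{\mathbb P^1}(1))$ along the incidence embedding $Y\hookrightarrow X\times\mathbb P^1$.

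One small fix: take $\lambda<a_0$ strictly. For $\lambda=a_0$ the inequality $\lambda(1-\lambda\epsilon)^{-1}<a_0$ fails.
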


\begin{rem}[Automorphisms]
No discreteness assumption on \(\mathrm{Aut}(Y)\) is required. The argument
proves ordinary, absolute coercivity of the Mabuchi functional, which
is stronger than coercivity modulo automorphisms. Thus any
holomorphic direction incompatible with absolute coercivity is
excluded by the conclusion itself. 
The construction implicitly forces, after the prescribed blowup, that the connected component of the automorphism group is a complex torus, as in \cite{SD5}.
\end{rem}

\subsection{$L_{N,\epsilon}$ is K\"ahler}
\label{subsec:kahler-polarization}

Since \(Z\) is the transverse common zero locus of two sections
defining the pencil, every member \(D_t\in|H_N|\) contains \(Z\)
with multiplicity one. Hence its total transform under the blowup is
\[
    \pi^*D_t=Y_t+E,
\]
where \(Y_t\) is the strict transform of \(D_t\). Since \(Y_t\) is a
fiber of \(f\), it represents \(F\), and therefore
\[
    F=\pi^*H_N-E.
\]
Since the blowup centre has codimension two,
\(
    K_Y=\pi^*K_X+E.
\)
Combining the two identities gives
\begin{equation} \label{eq:KplusF}
    K_Y+F=\pi^*(K_X+H_N)=\pi^*B_N.
\end{equation}

\begin{lem}\label{lem:relative-canonical}
The relative canonical class satisfies
\[
    \Theta_N
    =
    K_{Y/\mathbb P^1}
    =
    \pi^*B_N+F
\]
and is K\"ahler. Hence, the following classes are K\"ahler for every \(\epsilon>0\). 
\begin{enumerate}
    \item $L_{N,\epsilon}
    =
    F+\epsilon\Theta_N$
    \item $K_Y+L_{N,\epsilon}
    =
    \pi^*B_N+\epsilon\Theta_N$
\end{enumerate}
\end{lem}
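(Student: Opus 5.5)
First I would establish the identity for the relative canonical class. Since $\mathbb P^1$ has $K_{\mathbb P^1}=\mathcal O(-2)$, we get $K_{Y/\mathbb P^1}=K_Y-f^*K_{\mathbb P^1}=K_Y+2F$, which agrees with the definition of $\Theta_N$. Equation \eqref{eq:KplusF} gives $K_Y+F=\pi^*B_N$, and adding $F$ to both sides yields $\Theta_N=\pi^*B_N+F$. Substituting this into the other classes gives the remaining identities directly:
\[
L_{N,\epsilon}=F+\epsilon\Theta_N,\qquad K_Y+L_{N,\epsilon}=(K_Y+F)+\epsilon\Theta_N=\pi^*B_N+\epsilon\Theta_N .
\]

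The real content is showing that $\Theta_N=\pi^*B_N+F$ is ample, and hence K\"ahler. Both summands are nef: $\pi^*B_N$ is the pullback of an ample class, and $F$ is semiample by Proposition~\ref{prop:lefschetz-package}. So it is enough to show that their sum is strictly positive on every curve, and then apply a Kleiman/Seshadri-type criterion. The cleanest route goes through the morphism
\[
(\pi,f):Y\longrightarrow X\times\mathbb P^1 .
\]
This map is exactly the closed embedding of $Y$ as the incidence variety $\mathcal Y_W$ from subsection~\ref{subsec:lefschetz}. On $X\times\mathbb P^1$ the class $\mathrm{pr}_1^*B_N+\mathrm{pr}_2^*\mathcal O(1)$ is ample, because it is a box product of ample classes. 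Restricting an ample class to a closed subvariety keeps it ample, and its restriction to $Y$ is precisely $\pi^*B_N+F$. Therefore $\Theta_N$ is ample. The one point that needs care is that $(\pi,f)$ really is a closed embedding and not merely finite. That follows from the identification of $\operatorname{Bl}_ZX$ with $\mathcal Y_W$ quoted from \cite{VoisinHodgeII}. If one wants to avoid relying on it, it is still enough that $(\pi,f)$ is finite, since pullbacks of ample classes under finite morphisms are ample. Finiteness is immediate, because the fibers of $(\pi,f)$ are single points, as $\pi$ is injective on each fiber $Y_t$.

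The two consequences then follow formally. For $\epsilon>0$, the class $L_{N,\epsilon}=F+\epsilon\Theta_N$ is the sum of a nef class and a positive multiple of an ample class, so it is ample, and rational whenever $\epsilon$ is rational. Likewise $\pi^*B_N+\epsilon\Theta_N$ is nef plus ample, hence ample. Ample rational (or real) classes on a projective manifold are K\"ahler, which completes the plan. I expect no serious obstacle here. The only step requiring justification is the ampleness of $\pi^*B_N+F$, and the embedding into $X\times\mathbb P^1$ handles it without needing to analyse curves contained in the exceptional divisor $E$ by hand.
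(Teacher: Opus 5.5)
Your proof is correct and follows the paper's argument. You obtain $K_{Y/\mathbb P^1}=K_Y+2F=\pi^*B_N+F$ from \eqref{eq:KplusF}, get ampleness by restricting $\mathrm{pr}_X^*B_N+\mathrm{pr}_{\mathbb P^1}^*c_1(\mathcal O_{\mathbb P^1}(1))$ along the incidence embedding $(\pi,f)$, and conclude with nef plus K\"ahler for both families; your fallback, that $(\pi,f)$ is finite because $\pi|_{Y_t}$ is injective, is a valid way to avoid relying on the closed-embedding identification.
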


\begin{proof}
Since
\(
    K_{\mathbb P^1}
    =
    -2c_1\bigl(\mathcal O_{\mathbb P^1}(1)\bigr),
\)
and \(F=f^*c_1(\mathcal O_{\mathbb P^1}(1))\), one has
\[
    K_{Y/\mathbb P^1}
    =
    K_Y-f^*K_{\mathbb P^1}
    =
    K_Y+2F.
\]
Moreover,
\[
    K_Y+2F
    =
    (K_Y+F)+F
    =
    \pi^*B_N+F.
\]
To see that this class is K\"ahler, consider the incidence embedding 
\(
    \iota=(\pi,f):Y\hookrightarrow X\times\mathbb P^1
\)
Then
\[
    \Theta_N
    =
    \iota^*
    \left(
        \mathrm{pr}_X^*B_N
        +
        \mathrm{pr}_{\mathbb P^1}^*
        c_1\bigl(\mathcal O_{\mathbb P^1}(1)\bigr)
    \right).
\]
The class in parentheses is K\"ahler on
\(X\times\mathbb P^1\), and hence its restriction to \(Y\) is
K\"ahler.
Finally, \(F\) is nef, so
\(
    F+\epsilon\Theta_N
\)
is K\"ahler for every \(\epsilon>0\). Since \(\pi^*B_N\) is nef,
the same argument shows that
\(
    \pi^*B_N+\epsilon\Theta_N
\)
is K\"ahler.
\end{proof}

\subsection{The sufficient condition for existence of cscK metrics}

We aim to prove that our Lefschetz blowup satisfies the following sufficient existence criterion for cscK metrics, which is an immediate consequence of the 
entropy--energy criterion from section \ref{subsec:thresholds}. 

\begin{prop}[Entropy--energy existence criterion for cscK metrics]\label{prop:goal-criterion}
Let $L$ be the class of an ample $\mathbb Q$-line bundle on $Y$.
Assume that
\begin{enumerate}
    \item $K_Y+L$ is K\"ahler
    \item $\delta(Y,L)>1$
    \item $ \Gamma^{\mathrm{pp}}_{K_Y+L}(L)>0.$
\end{enumerate}
Then the Mabuchi functional is coercive for $L$, i.e. on the associated space of K\"ahler potentials for any K\"ahler form representing the class. 
\end{prop}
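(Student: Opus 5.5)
The plan is to rewrite the energy part $\mathrm E_\rho$, with $\rho\in c_1(K_Y)$, as a twisted energy shifted by a multiple of $L$, and then combine it with the entropy threshold through Proposition~\ref{prop:energy-entropy}.

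First I apply Lemma~\ref{lem magic J-formula} with $\beta=K_Y$ and $a=1$. This gives
\[
  \Gamma^{\mathrm{pp}}_{K_Y+L}(L)=\Gamma^{\mathrm{pp}}_{K_Y}(L)+1 .
\]
By definition, $\Gamma^{\mathrm{pp}}_{\rho}(L)$ is exactly $\Gamma^{\mathrm{pp}}_{K_Y}(L)$. The twisted energy $\mathrm E_\beta$ is defined from \eqref{eq:energy functional} by replacing $\rho$ with a representative of $\beta$ and $\overline S$ with $-\underline\beta$. Since
\[
  \overline S=-n\frac{K_Y\cdot L^{n-1}}{L^n}=-\underline{K_Y},
\]
we get $\mathrm E_\rho=\mathrm E_{K_Y}$ on the nose. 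Hence hypothesis (3) reads
\[
  \Gamma^{\mathrm{pp}}_{\rho}(L)>-1 .
\]

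Second, hypothesis (2), together with Zhang's theorem $\delta^{\mathrm{an}}=\delta$, gives $\delta^{\mathrm{an}}(L)>1$. Adding the two bounds,
\[
  \Gamma^{\mathrm{pp}}_{\rho}(L)+\delta^{\mathrm{an}}(L)>-1+1=0 .
\]
Proposition~\ref{prop:energy-entropy} then yields coercivity of $\mathrm M$ on $\mathcal H_\omega$ for any Kähler form $\omega\in c_1(L)$. This is independent of the representative, since all thresholds are cohomological. Chen--Cheng then gives existence of a cscK metric, though only coercivity is claimed.

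Hypothesis (1) is not used in this deduction. It is there so that later Theorem~\ref{thm:GaoChen} can be applied with $\beta=K_Y+L$ Kähler, turning condition (3) into the computable slope condition $\mathcal E_{K_Y+L}(L)>0$.

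The step needing the most care is the verification that the shifted twisted energy matches exactly. That is, Lemma~\ref{lem magic J-formula} must be stated for $\mathrm E_\beta$ in the normalization where $\mathrm E_{\beta+aL}=\mathrm E_\beta+a\,\mathrm E_\omega$ up to an additive bounded term. I would check this directly: replacing $\theta$ by $\theta+a\omega$ adds $a\sum_j\int\varphi\,\omega\wedge\omega_\varphi^j\wedge\omega^{n-1-j}/V - n a\,\mathrm E(\varphi)$, which is $a(\mathrm I-\mathrm J)$ by the standard identities. Beyond that, the argument is bookkeeping.
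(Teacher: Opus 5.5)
Your proposal is correct and is essentially the paper's proof. The affine shift $\Gamma^{\mathrm{pp}}_{K_Y+L}(L)=1+\Gamma^{\mathrm{pp}}_{K_Y}(L)$ turns hypothesis (3) into $\Gamma^{\mathrm{pp}}_{K_Y}(L)>-1$, and together with $\delta(Y,L)>1$ this feeds directly into the energy--entropy criterion. Your extra checks, that $\mathrm E_\rho=\mathrm E_{K_Y}$ in the stated normalization and that $\mathrm E_{\beta+aL}=\mathrm E_\beta+a(\mathrm I-\mathrm J)$ for the representative $\theta+a\omega$, are right, and as in the paper, hypothesis (1) plays no role in this step.
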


\begin{proof}
By linearity of the energy threshold,
\[
    \Gamma^{\mathrm{pp}}_{K_Y+L}(L)
    =
    1+\Gamma^{\mathrm{pp}}_{K_Y}(L).
\]
The assumptions therefore imply
\[
    \delta(Y,L)+
    \Gamma^{\mathrm{pp}}_{K_Y}(L)>0.
\]
The conclusion follows from
Proposition~\ref{prop:energy-entropy}.
\end{proof}

\begin{rem}
After rescaling, the alleged cscK class on the blowup $Y = \mathrm{Bl}_Z X$ can be written as 
\begin{equation}\label{eq:normalized-blowup-class}
  L_{N,\epsilon}'
  :=\frac{L_{N,\epsilon}}{N(1+2\epsilon)}
  =\pi^*\left(
     A+\frac{\epsilon}{N(1+2\epsilon)}K_X
   \right)
   -\frac{1+\epsilon}{N(1+2\epsilon)}[E].
\end{equation}
Thus the downstairs class is a small perturbation of $c_1(A)$ in the direction of the canonical class $K_X$, and allows a comparison with the standard K\"ahler classes $\pi^*\alpha - \epsilon[E]$ appearing in the blowup literature.
\end{rem}

\medskip

\subsection{Computation of the slope threshold for Lefschetz pencils}
\label{subsec:slope-computation}

Let \(F=c_1(f^*\mathcal O_{\mathbf P^1}(1))\) be the fiber class introduced above. Fix \(\epsilon>0\), and write for short
\[
    L:=L_{N,\epsilon}
      =F+\epsilon\Theta_N.
\]
By Lemma~\ref{lem:relative-canonical}, both \(L\) and
\(K_Y+L\) are K\"ahler. Recall from subsection \ref{subsec: opt dest subvar} that the slope
threshold associated with the pair \((K_Y+L,L)\) is governed by the
nef threshold
\[
    t
    :=
    \inf\bigl\{
        s\in\mathbb R:
        sL-(K_Y+L)\ \text{is nef}
    \bigr\}
\]
and the corresponding nef class
\[
    D:=tL-(K_Y+L).
\]
Since $\Theta_N = K_Y + 2F$ we have
\[
    L=F+\epsilon(K_Y+2F),
\]
and
\begin{equation} \label{eq:cancellation}
    K_Y+L
    = L + \epsilon^{-1}(L-F) - 2F = 
    (1+\epsilon^{-1})L
    -
    (\epsilon^{-1}+2)F.
\end{equation}
As \(F\) is nef, this shows that
\[
    t\leq1+\epsilon^{-1}.
\]
Conversely, let \(Y_u\) be a smooth fiber of \(f\). Since
\(F|_{Y_u}=0\), for every \(s<1+\epsilon^{-1}\) one has
\[
\left(
    sL-(K_Y+L)
\right)\big|_{Y_u}
=
\left(
    s-1-\epsilon^{-1}
\right)L\big|_{Y_u},
\]
which is not nef. Hence the nef threshold precisely equals
\[
    t=1+\epsilon^{-1}
\]
and cancellation using \eqref{eq:cancellation} gives
\[
    D
    =
    (\epsilon^{-1}+2)F
    =
    f^*\!\left(
        (\epsilon^{-1}+2)c_1
        \bigl(\mathcal O_{\mathbb P^1}(1)\bigr)
    \right).
\]
Corollary~\ref{cor:mixed-null-fibration} therefore applies:
every smooth fiber computes the slope threshold, and Proposition
\ref{prop:mixed-null-divisor} gives
\begin{equation} \label{eq:slope}
    \mathcal E_{K_Y+L}(L)
    =
    1+\epsilon^{-1}
    -
    (\epsilon^{-1}+2)n
    \frac{F\cdot L^{n-1}}{L^n}.
\end{equation}
\noindent In order to estimate this quantity, we rewrite the exact formula for the slope threshold as follows.

\begin{prop}[Exact slope]\label{prop:exact-canonical-energy}
Use the shorthand $L=L_{N,\epsilon}$ and $B_N = K_X + H_N$. Then
\begin{equation}\label{eq:one-plus-delta-pp}
 \mathcal E_{K_Y + L}(L) = 1 + \mathcal E_{K_Y}(L)
  =\frac{B_N^n+\epsilon\Theta_N^n}
  {nH_N\cdot B_N^{n-1}+\epsilon\Theta_N^n}
  >0.
\end{equation}
Hence 
$$
\Gamma^{\mathrm{pp}}_{K_Y + L}(L) > 0.
$$
\end{prop}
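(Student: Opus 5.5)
The plan is to evaluate the exact formula \eqref{eq:slope} by computing the two intersection numbers $F\cdot L^{n-1}$ and $L^n$ in terms of data on $X$. Then I simplify, and finally pass from the slope to the energy threshold using Gao Chen's theorem.

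\textbf{Reduction to intersection numbers on $X$.} Write $L=F+\epsilon\Theta_N$ and use $F^2=0$ from Proposition~\ref{prop:lefschetz-package}. Expanding binomially gives
\[
L^n=\epsilon^n\Theta_N^n+n\epsilon^{n-1}F\cdot\Theta_N^{n-1},
\qquad
F\cdot L^{n-1}=\epsilon^{n-1}F\cdot\Theta_N^{n-1}.
\]
By Lemma~\ref{lem:relative-canonical}, $\Theta_N=\pi^*B_N+F$, so again by $F^2=0$ we get $F\cdot\Theta_N^{n-1}=F\cdot(\pi^*B_N)^{n-1}$. Substituting $F=\pi^*H_N-E$, this equals $H_N\cdot B_N^{n-1}-E\cdot(\pi^*B_N)^{n-1}$. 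The second term vanishes: by the projection formula it is computed by pushing the cycle $E$ forward to $X$, and $\pi_*E=0$ because $\pi(E)=Z$ has dimension $n-2<n-1$. Set $a:=H_N\cdot B_N^{n-1}$ and $T:=\Theta_N^n$. The same expansion shows
\[
T=(\pi^*B_N+F)^n=B_N^n+nF\cdot(\pi^*B_N)^{n-1}=B_N^n+na.
\]

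\textbf{Algebraic simplification.} Insert these values into \eqref{eq:slope}. The powers $\epsilon^{n-1}$ cancel, giving
\[
n\frac{F\cdot L^{n-1}}{L^n}=\frac{na}{\epsilon T+na},
\]
and hence
\[
\mathcal E_{K_Y+L}(L)=\frac{(1+\epsilon)(\epsilon T+na)-(1+2\epsilon)na}{\epsilon(\epsilon T+na)}
=\frac{\epsilon T+\epsilon^2T-\epsilon na}{\epsilon(\epsilon T+na)}.
\]
Dividing numerator and denominator by $\epsilon$ and using $T-na=B_N^n$ yields $(B_N^n+\epsilon T)/(na+\epsilon T)$, which is the claimed expression. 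The identity $\mathcal E_{K_Y+L}(L)=1+\mathcal E_{K_Y}(L)$ is Lemma~\ref{lem magic J-formula} with $a=1$. For positivity, note that $B_N$ is ample, so $B_N^n>0$ and $a>0$; also $\Theta_N$ is K\"ahler, so $T>0$. Hence both numerator and denominator are positive.

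\textbf{Passage to the energy threshold.} Both $L$ and $\beta=K_Y+L$ are K\"ahler by Lemma~\ref{lem:relative-canonical}. Theorem~\ref{thm:GaoChen} therefore applies, and $\mathcal E_{K_Y+L}(L)>0$ gives $\Gamma^{\mathrm{pp}}_{K_Y+L}(L)>0$.

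The only delicate point is the vanishing $E\cdot(\pi^*B_N)^{n-1}=0$, which relies on $Z$ having codimension two. Beyond that, the work is bookkeeping, and the crucial cancellation is $T-na=B_N^n$.
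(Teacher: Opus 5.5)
Your proposal is correct and follows essentially the same computation as the paper: expand $L^n$ and $F\cdot L^{n-1}$ using $F^2=0$, substitute into \eqref{eq:slope}, identify the resulting intersection numbers with data on $X$, and then invoke Theorem~\ref{thm:GaoChen} for the Kähler classes $L$ and $K_Y+L$. The differences are cosmetic. You expand $\Theta_N=\pi^*B_N+F$ directly, whereas the paper passes through $\Theta_N=K_Y+2F$ and the intermediate formula $\mathcal E_{K_Y}(L)=K_Y^n/(nF\cdot K_Y^{n-1}+\epsilon\Theta_N^n)$. You also derive $F\cdot(\pi^*B_N)^{n-1}=H_N\cdot B_N^{n-1}$ from $F=\pi^*H_N-E$ and $\pi_*E=0$, whereas the paper restricts to a fiber $Y_t\cong D_t$.
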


\begin{proof}
Since $F^2=0$ and $L=F+\epsilon\Theta_N$ we compute
\[
  L^n
  =\epsilon^{n-1}
   \left(nF\cdot\Theta_N^{n-1}
   +\epsilon\Theta_N^n\right), \qquad F\cdot L^{n-1}
  =\epsilon^{n-1}F\cdot\Theta_N^{n-1}.
\]
Subtracting \(1\) from the formula \eqref{eq:slope} for
\(\mathcal E_{K_Y+L}(L)\), and using \(F^2=0\), gives
\[
 \mathcal E_{K_Y}(L) = \epsilon^{-1}
    -
    (\epsilon^{-1}+2)n
    \frac{F\cdot L^{n-1}}{L^n} = 
  \frac{\Theta_N^n-2nF\cdot\Theta_N^{n-1}}
  {nF\cdot\Theta_N^{n-1}+\epsilon\Theta_N^n}.
\]
Because $\Theta_N=K_Y+2F$ and $F^2=0$,
\[
  F\cdot\Theta_N^{n-1}=F\cdot K_Y^{n-1}
\]
and
\[
  \Theta_N^n-2nF\cdot\Theta_N^{n-1}=K_Y^n.
\]
The expression therefore simplifies to
\begin{equation}\label{eq:exact-delta-pp}
  \mathcal E_{K_Y}(L)
  =\frac{K_Y^n}
  {nF\cdot K_Y^{n-1}+\epsilon\Theta_N^n}.
\end{equation}
Next, again using $F^2=0$,
\[
  K_Y^n+nF\cdot K_Y^{n-1}=(K_Y+F)^n.
\]
By equation \eqref{eq:KplusF}, $K_Y+F=\pi^*B_N$, and
\[
  F\cdot K_Y^{n-1}
  =F\cdot\pi^*B_N^{n-1}
  =H_N\cdot B_N^{n-1}.
\]
Indeed, for the last equality, if \(Y_t\) is a fiber corresponding to a member
\(D_t\in|H_N|\), then \(\pi|_{Y_t}:Y_t\to D_t\) is an isomorphism.
Therefore
\[
    F\cdot\pi^*B_N^{n-1}
    =
    \int_{Y_t}(\pi^*B_N|_{Y_t})^{n-1}
    =
    \int_{D_t}(B_N|_{D_t})^{n-1}
    =
    H_N\cdot B_N^{n-1}.
\]
Adding one to \eqref{eq:exact-delta-pp} therefore yields
\eqref{eq:one-plus-delta-pp}: 
$$
\mathcal E_{K_Y + L}(L) = 1 + \mathcal E_{K_Y}(L) = 1 + \frac{K_Y^n}
  {nF\cdot K_Y^{n-1}+\epsilon\Theta_N^n}
= \frac{B_N^n+\epsilon\Theta_N^n}
  {nH_N\cdot B_N^{n-1}+\epsilon\Theta_N^n}.
$$
Its numerator is positive
because \(B_N\) and \(\Theta_N\) are K\"ahler. Its denominator is
positive because \(H_N\) is ample and \(B_N\) is K\"ahler, so
\[
    H_N\cdot B_N^{n-1}>0,
\]
while \(\Theta_N^n>0\). 
The final assertion follows from Theorem \ref{thm:GaoChen}.
\end{proof}

\begin{rem}[Asymptotic energy]\label{rem:asymptotic-energy}
The exact positivity in \eqref{eq:one-plus-delta-pp} is sufficient for the
proof, but it may be of independent interest to note that an asymptotic statement is available: if we first let
$\epsilon\to0$ and then $N\to\infty$, then since
\[
  K_Y=\pi^*B_N-F,
  \qquad
  F\cdot K_Y^{n-1}=H_N\cdot B_N^{n-1},
\]
one obtains
\[
  \lim_{\epsilon\to0^+}
  \mathcal E_{K_Y}(L_{N,\epsilon})
  =-\frac{n-1}{n}
   +\frac{K_X\cdot(K_X+H_N)^{n-1}}
   {nH_N\cdot(K_X+H_N)^{n-1}},
\]
and hence
\[
  \lim_{N\to\infty}\lim_{\epsilon\to0^+}
  \mathcal E_{K_Y}(L_{N,\epsilon})
  =-\frac{n-1}{n}.
\]
\end{rem}

\medskip

\subsection{Adiabatic limit of the delta invariant}

The preceding results explain the energy localization for any fibration
over a curve whose nef threshold class is pulled back from the base.  The
Lefschetz construction supplies three additional properties simultaneously:
it produces such a fibration from every smooth projective manifold, the
direction of perturbation \(K_Y+2F\) is K\"ahler, and the singular fibers have
log-canonical threshold one.  In this paragraph we observe that the last property allows us to compute the full adiabatic delta invariant by a direct application of a theorem of Hattori \cite{Hattori}.

\begin{lem}\label{lem:adiabatic-delta}
For the Lefschetz fibration \(f:Y\to\mathbb P^1\),
\[
    \lim_{\epsilon\to0^+}
    \delta(Y,L_{N,\epsilon})=2.
\]
In particular,
\[
    \delta(Y,L_{N,\epsilon})>1
\]
for every sufficiently small positive rational \(\epsilon\).
\end{lem}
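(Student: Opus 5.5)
The plan is to reduce directly to Corollary~\ref{cor:hattori-lefschetz}, which already gives $\lim_{\epsilon\to0^+}\delta(Y,F+\epsilon A)=2$ for every ample rational class $A$ on $Y$. The only mismatch is that $L_{N,\epsilon}=F+\epsilon\Theta_N$, so we must check that $\Theta_N$ qualifies as such an $A$. By Lemma~\ref{lem:relative-canonical}, $\Theta_N=K_{Y/\mathbb P^1}=\pi^*B_N+F$ is K\"ahler. It is moreover the first Chern class of the honest line bundle $K_Y\otimes f^*\mathcal O_{\mathbb P^1}(2)$, so it is an integral class. An integral K\"ahler class on a projective manifold is ample by Kodaira's embedding theorem; alternatively, it is the restriction of the ample class $\mathrm{pr}_X^*B_N+\mathrm{pr}_{\mathbb P^1}^*\mathcal O(1)$ under the closed embedding $\iota=(\pi,f)$. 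Hence $A:=\Theta_N$ is an ample rational class. For rational $\epsilon>0$, $F+\epsilon\Theta_N$ is then an ample $\mathbb Q$-class, which is exactly the setting of Theorem~\ref{thm:hattori}.

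Applying Corollary~\ref{cor:hattori-lefschetz} with $A=\Theta_N$ gives $\lim_{\epsilon\to0^+}\delta(Y,L_{N,\epsilon})=2$, with the limit taken through positive rationals. Behind this is the hypothesis $B_f=0$, which follows from Proposition~\ref{prop:lefschetz-package}. There every fiber $Y_t$ is reduced with $\operatorname{lct}(Y,Y_t)=1$: at the nondegenerate quadratic singularities this is the value $\min\{1,n/2\}=1$ for $n\ge2$, and at smooth points the fiber is a reduced smooth divisor. Thus $\delta_{(\mathbb P^1,0)}(\mathcal O(1))=2/\deg\mathcal O(1)=2$.

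The second assertion follows from the definition of the limit. Since $2>1$, there is $\epsilon_0>0$ such that $\delta(Y,L_{N,\epsilon})>3/2>1$ for every rational $0<\epsilon<\epsilon_0$.

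The only genuine point requiring care is the first step: Hattori's theorem is stated for an ample rational class $A$ and a fixed polarization pulled back from the base. So one must make sure that $\Theta_N$ is independent of $\epsilon$, which it is, since it depends only on $N$ and the chosen pencil, and that it is ample rather than merely K\"ahler. This is handled by integrality as above.
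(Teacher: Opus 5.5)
Your proposal is correct and follows the paper's proof. Both apply Corollary~\ref{cor:hattori-lefschetz} (Hattori's theorem with $B_f=0$, from Proposition~\ref{prop:lefschetz-package}) with the $\epsilon$-independent class $A=\Theta_N$ and evaluate $\delta(\mathbb P^1,\mathcal O_{\mathbb P^1}(1))=2$; your explicit check that $\Theta_N$ is ample rather than merely K\"ahler, via integrality or the embedding $\iota=(\pi,f)$, is a useful detail that the paper leaves implicit in Lemma~\ref{lem:relative-canonical}.
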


\begin{proof}
This is Corollary~\ref{cor:hattori-lefschetz}, applied with the ample
class \(A=\Theta_N\). We repeat the brief argument for convenience of the reader. Proposition~\ref{prop:lefschetz-package} gives
$\operatorname{lct}_Y(Y_t)=1$ for every fiber, hence $B_f=0$.
Applying Theorem~\ref{thm:hattori} with $C=\mathbb P^1$, $ L_C=c_1\bigl(\mathcal O_{\mathbb P^1}(1)\bigr)$, and $A=\Theta_N$
yields
\[
  \lim_{\epsilon\to0^+}\delta(Y,L_{N,\epsilon})
  =\delta\bigl(\mathbb P^1,\mathcal O_{\mathbb P^1}(1)\bigr).
\]
Every prime divisor over \(\mathbb P^1\) is represented by a point
\(q\in\mathbb P^1\), so it suffices to consider the valuation
\(\operatorname{ord}_q\). Its log discrepancy is
\(A_{\mathbb P^1}(\operatorname{ord}_q)=1\). Moreover,
\[
  \operatorname{vol}\bigl(\mathcal O_{\mathbb P^1}(1)-xq\bigr)
  =\max\{1-x,0\},
\]
and hence
\[
  S_{\mathcal O(1)}(\operatorname{ord}_q)
  =
  \int_0^\infty
  \operatorname{vol}\bigl(\mathcal O_{\mathbb P^1}(1)-xq\bigr)\,dx
  =
  \int_0^1(1-x)\,dx
  =
  \frac12.
\]
Therefore
\[
  \delta\bigl(\mathbb P^1,\mathcal O_{\mathbb P^1}(1)\bigr)=2
\]

and the conclusion follows. 
\end{proof}

\noindent Thus the fibration structure allows us to localize and control the energy along the blowup, while the Lefschetz condition supplies
a lower bound on the analytic delta invariant, which is the entropy threshold.

\medskip

\subsection{Energy-entropy compensation and proof of Theorem \ref{thm main intro}}
We now conclude the proof of our main theorem \ref{thm main intro} by comparing estimates of the respective energy and entropy thresholds.


\begin{proof}[Proof of Theorem~\ref{thm:main}]
Let $X$ be a smooth complex projective variety, let $A$ be an ample line bundle and fix \(N\) 
so that $H_N := NA$ is very ample and $K_X + H_N$ is ample. By Lemma~\ref{lem:adiabatic-delta}, for every sufficiently small positive rational number $\epsilon$, one has
\[
\delta(Y,L_{N,\epsilon}) > 1.
\] 
For every $\epsilon > 0$, both \(L_{N,\epsilon}\) and
\[
    K_Y+L_{N,\epsilon}
    =
    \pi^*B_N+\epsilon\Theta_N
\]
are K\"ahler, and Proposition~\ref{prop:exact-canonical-energy} gives
\[
    \Gamma^{\mathrm{pp}}_{K_Y+L_{N,\epsilon}}
    (L_{N,\epsilon})>0,
\]
and hence
\[
    \Gamma^{\mathrm{pp}}_{K_Y}
    (L_{N,\epsilon}) = \Gamma^{\mathrm{pp}}_{K_Y+L_{N,\epsilon}}
    (L_{N,\epsilon}) - 1 > -1.
\]
Combining the two, Proposition~\ref{prop:goal-criterion} implies coercivity of the
Mabuchi functional. The cscK conclusion follows from \cite{ChenCheng}. 
\end{proof}

\noindent Taking $A=H$ and $N=1$ proves Theorem 1.2.

\begin{rem}
Note that we are obliged to use Gao Chen's theorem \cite{GaoChen} for a K\"ahler class $K_Y + L$, at the cost of worsening the coercivity estimate. For surfaces this extra step is not necessary, due to the improved ampleness criterion for non-psef classes, see \cite{JSD1}.
\end{rem}

\medskip

\subsection{Proof of Theorem \ref{cor main intro}}

\begin{cor}\label{cor:birational-model}
Every complex projective variety is birational to a smooth projective
variety which admits a cscK metric.
\end{cor}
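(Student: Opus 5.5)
The argument reduces the statement to Theorem~\ref{thm:main} by resolution of singularities followed by one further blowup. Let $X_0$ be a complex projective variety. If it is reducible or non-reduced, I would first replace it by one of its irreducible components with the reduced structure, since ``birational'' only makes sense componentwise for varieties. By Hironaka \cite{Hironaka} there is then a projective birational morphism $\nu: X \to X_0$ with $X$ a smooth projective variety.

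If $\dim X = 0$, then $X$ is a point and the statement is vacuous; the paper presumably tacitly excludes this case or treats it as trivial. If $\dim X = 1$, then $X$ is a smooth projective curve. Such a curve always carries a constant curvature metric in every Kähler class, by uniformization or directly by solving $\Delta u = $ const on a curve. So $\mu = \nu$ works in this case.

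For $n=\dim X\geq 2$, I would fix any ample line bundle $A$ on $X$. For $N$ large, $H_N = NA$ is very ample by Serre's theorem, and $K_X+H_N$ is ample because ampleness is open and $K_X/N + A$ is ample for $N \gg 1$. By Lefschetz theory (the paragraph on $\mathcal U_{\mathrm{Lef}}$) there is a Lefschetz pencil in $|H_N|$ with smooth codimension-two base locus $Z$. Let $Y = \Bl_Z X$. Theorem~\ref{thm:main} then gives a cscK metric on $Y$ in the rational Kähler class $L_{N,\epsilon}$ for small rational $\epsilon>0$.

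The composition $\mu := \nu\circ\pi : Y \to X_0$ is projective and birational, since each factor is: $\pi$ is a blowup along a codimension-two center, hence an isomorphism off $Z$. Moreover, $Y$ is smooth and projective because it is the blowup of a smooth variety along a smooth center. This gives the corollary, even in the more precise form stated after Theorem~\ref{cor main intro}.

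There is no real obstacle here; all the analytic content sits in Theorem~\ref{thm:main}. The only point needing care is bookkeeping for degenerate cases: reducible or non-reduced $X_0$, and dimensions $0$ and $1$, which fall outside the hypothesis $n \ge 2$ of Theorem~\ref{thm:main}. I would handle these by the reductions above.
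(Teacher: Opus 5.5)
Your proposal is correct and follows the paper's proof: resolve singularities by Hironaka and apply Theorem~\ref{thm:main} to the smooth model, with your extra bookkeeping (choice of $A$ and $N$, the pencil, the composition $\nu\circ\pi$, and the low-dimensional cases) making explicit what the paper's three-line argument leaves implicit. One small caveat in the curve case: uniformization is the right justification there, because for genus $g\neq 1$ a constant-curvature metric requires solving a nonlinear Liouville-type equation rather than a linear equation $\Delta u=\mathrm{const}$.
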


\begin{proof}
We may suppose that
$\dim X\ge2$. First apply a projective resolution of singularities, due to Hironaka \cite{Hironaka}. Then apply
Theorem~\ref{thm:main} to the resulting smooth projective variety.
\end{proof}

\medskip

\section{Applications} \label{sec:applications}

\subsection{Point blowups on projective surfaces}
\label{sec:surfaces}

\noindent In the special case of projective surfaces, the codimension two base locus in Theorem \ref{thm main intro} is a union of distinct points. It is natural to ask about the numerical quantity
\newline
\[
  b_{\mathrm{cscK}}(X)
  :=
  \min\left\{
      r\geq0:
      \begin{array}{c}
      \text{there exist distinct points }p_1,\ldots,p_r\in X\\
      \text{such that }\Bl_{\{p_1,\ldots,p_r\}}X
      \text{ admits a cscK metric} \\ \text{in some K\"ahler class}
      \end{array}
  \right\}
\]
\newline
where the points $p_1,\ldots,p_r\in X$ are required to be distinct, and a manifold is said to be cscK if it admits a cscK metric in some K\"ahler class. We use the convention that the minimum is \(+\infty\) if no such
collection exists. 
Thus, the conjectural picture of \cite{Taubes, LeBrunSinger, Tipler, SzekelyhidiExtremalMetrics} and Donaldson amounts to proving $ b_{\mathrm{cscK}}(X)$ is finite. 

The constructive nature of the Lefschetz blowup allows us to provide a (coarse but universal) estimate on how many points are required to blowup, in order to stabilize a projective surface.

\begin{thm}\label{thm:surface-bound}
Let \(A\) be a very ample line bundle on a smooth projective surface
\(X\), and let \(m\geq1\) be such that \(K_X+mA\) is ample. A general
Lefschetz pencil in \(|mA|\) has \(r=m^2A^2\) distinct base points. If
\(\pi:Y\to X\) denotes their blowup, then
\[
    F+\epsilon K_{Y/\mathbb P^1}
\]
contains a cscK metric for every sufficiently small rational
\(\epsilon>0\). In particular,
\[
    b_{\mathrm{cscK}}(X)\leq m^2A^2,
\]
so \(b_{\mathrm{cscK}}(X)\) is finite.
\end{thm}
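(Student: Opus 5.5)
The plan is to reduce the statement to Theorem~\ref{thm:main}, applied with $n=2$, $H_N := mA$ (that is, $A$ replaced by $mA$ and $N=1$), plus an elementary count of base points.

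First, the hypotheses of Theorem~\ref{thm:main} hold. Since $A$ is very ample and $m\geq 1$, the bundle $H:=mA$ is very ample. By assumption $B:=K_X+H$ is ample. By the Lefschetz theorem recalled in subsection~\ref{subsec:lefschetz}, a general pencil $|W|\subset|H|$ is Lefschetz. Its base locus $Z=\{s_0=s_1=0\}$ is smooth of codimension two, so on a surface it is a reduced zero-dimensional scheme, i.e.\ a finite set of distinct points. Its length is computed by intersection theory: since $Z$ is the transverse intersection of two members $D_0,D_1\in|H|$, one gets
\[
\#Z = D_0\cdot D_1 = H^2 = m^2A^2 .
\]
Transversality is exactly the smoothness of the reduced scheme $Z$, so each point counts with multiplicity one. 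Thus $r=m^2A^2$ and the points are distinct.

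Next, the blowup of $X$ along this reduced zero-dimensional scheme is the simultaneous blowup of the $r$ points. This blowup is $Y=\Bl_ZX$ with the Lefschetz fibration $f:Y\to\mathbb P^1$ from Proposition~\ref{prop:lefschetz-package}. In the notation of Section~\ref{sec:main-proof}, $\Theta=K_{Y/\mathbb P^1}$ and the class $F+\epsilon K_{Y/\mathbb P^1}$ is exactly $L_{N,\epsilon}$. Theorem~\ref{thm:main}, which the paper has proved for all $n\geq2$ (via Proposition~\ref{prop:exact-canonical-energy}, Lemma~\ref{lem:adiabatic-delta} and Proposition~\ref{prop:goal-criterion}), then gives a cscK metric in $F+\epsilon K_{Y/\mathbb P^1}$ for all sufficiently small rational $\epsilon>0$. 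By Lemma~\ref{lem:relative-canonical} this class is K\"ahler, so $Y$ is cscK in some K\"ahler class. This gives $b_{\mathrm{cscK}}(X)\leq r=m^2A^2<\infty$ directly from the definition.

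No step here is a real obstacle; the statement is a specialization of Theorem~\ref{thm:main}. The only point needing care is that ``general'' in the Lefschetz sense includes transversality of $s_0,s_1$, so that $Z$ is reduced and the count $H^2$ gives the number of distinct points. This follows from the smoothness of $Z$ built into the definition of a Lefschetz pencil. One should also check that the existence of $m$ with $K_X+mA$ ample is automatic for $m\gg1$, so the finiteness claim holds for every projective surface.
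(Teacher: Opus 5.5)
Your proposal is correct and follows the paper's proof: the base locus of a general pencil in $|mA|$ is the transverse intersection of two members, hence consists of $(mA)^2=m^2A^2$ distinct points, and the cscK conclusion is Theorem~\ref{thm:main} applied with $H_N=mA$. Your additional checks (very ampleness of $mA$, K\"ahlerness of the class so that the definition of $b_{\mathrm{cscK}}$ applies, and the existence of $m$ for $m\gg1$) only make explicit details that the paper's two-line proof leaves implicit.
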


\begin{proof}
The base locus of a general pencil in \(|mA|\) is the transverse
intersection of two members of the linear system, and hence consists
of \((mA)^2=m^2A^2\) distinct points. The conclusion follows from
Theorem~\ref{thm:main}.
\end{proof}

\begin{cor}\label{cor:universal-surface-bound}
For every very ample line bundle \(A\) on a smooth projective surface
\(X\),
\[
    b_{\mathrm{cscK}}(X)\leq16A^2.
\]
Consequently,
\[
    b_{\mathrm{cscK}}(X)
    \leq16\min_{A\ \mathrm{very\ ample}}A^2.
\]
In particular, the point-blowup conjecture holds for every smooth
projective surface.
\end{cor}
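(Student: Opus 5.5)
The plan is to deduce Corollary~\ref{cor:universal-surface-bound} directly from Theorem~\ref{thm:surface-bound} by specializing to $m=4$. The only input needed beyond that theorem is that $K_X+4A$ is ample whenever $A$ is very ample on a smooth projective surface $X$.

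For this ampleness, I will use the Nakai--Moishezon criterion. I first check $(K_X+4A)\cdot C>0$ for every irreducible curve $C\subset X$. Since $A$ is very ample, $A\cdot C\geq 1$, and $K_X+3A$ is nef (indeed globally generated) by the standard adjunction-theoretic result of Sommese and Ein--Lazarsfeld: for a very ample $A$ on a smooth $n$-fold, $K_X+(n+1)A$ is globally generated, and $n=2$ here. Hence $(K_X+4A)\cdot C=(K_X+3A)\cdot C+A\cdot C\geq 1$. For the self-intersection, nefness of $K_X+3A$ and ampleness of $A$ give $(K_X+4A)^2\geq (K_X+3A)\cdot A+A\cdot (K_X+4A)>0$, and the last term is positive by the curve estimate, applied to a smooth member of $|A|$. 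If one prefers to avoid citing the adjunction result, the alternative is Mori's cone theorem. It gives $K_X\cdot R\geq -3$ for extremal rays, with lengths at most $n+1=3$, while $A\cdot R\geq1$; this yields nefness of $K_X+3A$, and then adding $A$ gives strict positivity on the closed cone, so Kleiman's criterion gives ampleness. This ampleness step is the only real content, and I expect it to be the main (though standard) obstacle.

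With $m=4$, Theorem~\ref{thm:surface-bound} applies and gives a Lefschetz blowup of $16A^2$ distinct points carrying cscK metrics. Therefore $b_{\mathrm{cscK}}(X)\leq 16A^2$. Taking the infimum over very ample $A$ gives the second inequality; the minimum is attained because $A^2$ is a positive integer. Finiteness follows since every projective surface admits a very ample line bundle, which proves the point-blowup conjecture for smooth projective surfaces.
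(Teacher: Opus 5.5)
Your proposal is correct and follows the paper's proof: show that $K_X+3A$ is nef, deduce that $K_X+4A$ is ample, and apply Theorem~\ref{thm:surface-bound} with $m=4$; your fallback via the cone theorem and the length bound $0<-K_X\cdot C_i\leq 3$ is exactly the paper's argument. Your primary route, global generation of $K_X+3A$, is an equally valid way to get nefness; for very ample $A$ it follows from Castelnuovo--Mumford regularity plus Kodaira vanishing, or from Reider's theorem, rather than from Ein--Lazarsfeld. The Nakai--Moishezon computation can also be skipped, since nef plus ample is already ample by Kleiman's criterion.
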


\begin{proof}
By Mori's cone theorem and the length bound for extremal rays
\cite[Theorems~1.13 and~1.24]{KM98}, one has
\[
    \overline{\mathrm{NE}}(X)
    =
    \overline{\mathrm{NE}}(X)_{K_X\geq0}
    +\sum_i R_i,
\]
where each \(K_X\)-negative extremal ray \(R_i\) is generated by a
rational curve \(C_i\) satisfying \(0<-K_X\cdot C_i\leq3\).
Since \(A\) is ample and Cartier, \(A\cdot C_i\geq1\), and therefore
\((K_X+3A)\cdot C_i\geq0\). The class \(K_X+3A\) is also nonnegative
on \(\overline{\mathrm{NE}}(X)_{K_X\geq0}\), and is therefore nef.
Thus
\(K_X+4A=(K_X+3A)+A\) is ample, and the result follows from
Theorem~\ref{thm:surface-bound} with \(m=4\).
\end{proof}

\noindent If \(X\subset\mathbb P^N\) has degree \(d\), taking \(A\) to be the
hyperplane class gives \(b_{\mathrm{cscK}}(X)\leq16d\).

\begin{rem}
The point configuration and its cardinality are effective: one takes
the common zero locus of two general sections of \(4A\).  The only non-effective part of this construction, is the choice of admissible
adiabatic parameter \(\epsilon\). It remains an interesting open question to determine $b_{\mathrm{cscK}}(X)$ exactly.
\end{rem}

\medskip 

\subsection{Lefschetz blowups on Fano manifolds}

The construction takes an especially intrinsic form for smooth Fano
varieties. Since \(-K_X\) is ample, every sufficiently divisible
pluri-anticanonical system satisfies the positivity hypothesis of
Theorem \ref{thm main intro}. Thus both the Lefschetz centre and the resulting cscK
polarization are obtained directly from the anticanonical geometry of
\(X\), and in fixed dimension, Birkar's boundedness theorem makes the
construction uniform.

\begin{cor}\label{cor:fano}
Let \(X\) be a smooth Fano variety of dimension \(n\geq2\), and let
\(m\geq2\) be such that \(-mK_X\) is very ample. Choose a general
Lefschetz pencil in \(|-mK_X|\), with base locus \(Z\), and let
\(\pi:Y:=\Bl_ZX\to X\) be the associated blowup. Then, for every
sufficiently small rational \(\epsilon>0\), the class
\[
  F+\epsilon K_{Y/\mathbb P^1}
\]
contains a cscK metric. Moreover,
\[
    (-K_X)^{n-2}\cdot Z=m^2(-K_X)^n.
\]
\end{cor}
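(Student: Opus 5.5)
The plan is to reduce the statement to Theorem~\ref{thm:main} by taking $A=-K_X$ and $N=m$. Then $H_m=-mK_X$ is very ample by hypothesis, and
\[
B_m=K_X+H_m=-(m-1)K_X
\]
is ample precisely because $m\geq2$ and $X$ is Fano. These are the only hypotheses Theorem~\ref{thm:main} imposes on the pair $(X,H_N)$. We choose a general Lefschetz pencil in $|-mK_X|$, which exists by the Zariski-openness of $\mathcal U_{\mathrm{Lef}}$ recalled in subsection~\ref{subsec:lefschetz}. Its base locus $Z$ is smooth of codimension two, and $\pi:Y=\Bl_ZX\to X$ is the Lefschetz blowup with fibration $f$. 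Since $\Theta_m=K_{Y/\mathbb P^1}$ by Lemma~\ref{lem:relative-canonical}, Theorem~\ref{thm:main} gives coercivity of the Mabuchi functional, and hence a cscK metric, in $L_{m,\epsilon}=F+\epsilon K_{Y/\mathbb P^1}$ for all sufficiently small rational $\epsilon>0$. This settles the first assertion.

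For the intersection number we use that $Z$ is the transverse complete intersection of two members $D_0,D_\infty\in|-mK_X|$. Its fundamental class is therefore
\[
[Z]=[D_0]\cdot[D_\infty]=(-mK_X)^2=m^2(-K_X)^2
\]
in the Chow ring, or in cohomology. Intersecting with $(-K_X)^{n-2}$ gives
\[
(-K_X)^{n-2}\cdot Z=m^2(-K_X)^n .
\]
The only point to check is that no multiplicity appears. This holds because the two sections cut out $Z$ scheme-theoretically with the correct codimension, so $Z$ is a reduced local complete intersection. The same transversality was already used in the proof of Proposition~\ref{prop:lefschetz-package}, where each $D_t$ contains $Z$ with multiplicity one.

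The main obstacle is not in this corollary itself; everything substantive is inherited from Theorem~\ref{thm:main}. The one point needing care is confirming that ampleness of $K_X+H_N$ really is the only positivity input, which is why $m=1$ is excluded: for $m=1$ we would get $B_1=0$, which is not ample. With that confirmed, the proof is a direct specialization followed by a one-line intersection computation.
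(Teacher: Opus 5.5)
Your proposal is correct and follows the paper's own proof: you apply Theorem~\ref{thm:main} with $A=-K_X$ and $N=m$, using that $K_X-mK_X=-(m-1)K_X$ is ample for $m\geq2$. You then obtain the intersection number from $Z$ being the transverse complete intersection of two members of $|-mK_X|$.
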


\begin{proof}
Apply Theorem~\ref{thm:main} with \(A=-K_X\) and \(N=m\), noting that
\(K_X-mK_X=-(m-1)K_X\) is ample. The numerical identity follows since
\(Z\) is the transverse intersection of two members of
\(|-mK_X|\).
\end{proof}

\noindent Writing \(E\) for the exceptional divisor, the resulting polarization is
\[
    F+\epsilon K_{Y/\mathbb P^1}
    =\bigl(m+(2m-1)\epsilon\bigr)\pi^*(-K_X)-(1+\epsilon)E.
\]

\begin{rem}
In this case the exact slope formula of
Proposition~\ref{prop:exact-canonical-energy} gives
\[
  \lim_{\epsilon\to0^+}
  \mathcal E_{K_Y+L_\epsilon}(L_\epsilon)
  =
  \frac{m-1}{nm}>0.
\]
\end{rem}

\begin{cor}\label{cor:fano-uniform}
For every \(n\geq2\), there exist an integer \(m_n\geq2\) and a constant
\(R_n>0\) such that every smooth Fano \(n\)-fold \(X\) admits a cscK
birational model obtained by blowing up the smooth base locus of a
general pencil in
\[
    |-m_nK_X|,
\]
and the center satisfies
\[
    (-K_X)^{n-2}\cdot Z\leq R_n.
\]
\end{cor}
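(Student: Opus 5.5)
The plan is to combine Corollary~\ref{cor:fano} with boundedness of smooth Fano $n$-folds (Birkar's theorem, or in the smooth case already Koll\'ar--Miyaoka--Mori). The first step produces a single integer $m_n\geq 2$ such that $-m_nK_X$ is very ample for every smooth Fano $n$-fold $X$. Boundedness gives a flat projective family $\mathcal X\to T$ over a scheme of finite type whose fibres include every smooth Fano $n$-fold. Over the open locus of smooth Fano fibres, relative Matsusaka/Kodaira-type statements together with Noetherian induction on $T$ yield a uniform $m$ for which $-mK$ is relatively very ample; we then set $m_n:=\max(m,2)$. An alternative is to cite an effective very ampleness bound for $-K_X$ on smooth Fanos that depends only on $n$. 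In either case the output is an $m_n$ depending only on $n$.

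With $m_n$ fixed, Corollary~\ref{cor:fano} applies directly to each such $X$: for a general Lefschetz pencil in $|-m_nK_X|$ with base locus $Z$, the blowup $\Bl_ZX$ carries cscK metrics in $F+\epsilon K_{Y/\mathbb P^1}$ for small rational $\epsilon>0$. This gives the required birational model. The numerical identity in Corollary~\ref{cor:fano} gives
\[
(-K_X)^{n-2}\cdot Z=m_n^2(-K_X)^n .
\]
It therefore suffices to bound the anticanonical volume $(-K_X)^n$ uniformly in $n$. This follows from boundedness, since $(-K_X)^n$ is locally constant in the flat family and $T$ has finitely many components. Alternatively, one can cite the Koll\'ar--Miyaoka--Mori degree bound $(-K_X)^n\leq (n(n+1))^n$ for smooth Fanos. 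We then set $R_n:=m_n^2\sup_X(-K_X)^n<\infty$.

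The main obstacle is the first step: we need uniform very ampleness, not merely uniform ampleness or a uniform bound on $(-K_X)^n$. I would obtain it from the bounded family by taking a relatively very ample line bundle on $\mathcal X/T$ and comparing it with $-K$ fibrewise. Since $\operatorname{Pic}$ of a Fano is discrete and the family has finitely many strata, after stratifying $T$ both bundles become fixed multiples of each other up to numerical equivalence, and hence up to linear equivalence because $H^1(X,\mathcal O_X)=0$. Uniform Castelnuovo--Mumford regularity on each stratum then gives a uniform $m$. Everything after this step is a direct substitution into results already established in the paper.
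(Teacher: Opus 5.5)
Your proposal is correct and follows the paper's proof: boundedness of smooth Fano $n$-folds (the paper cites Birkar, though Koll\'ar--Miyaoka--Mori already suffices in the smooth case) gives a uniform $m_n\geq2$ with $-m_nK_X$ very ample and a uniform bound $V_n\geq(-K_X)^n$, and then Corollary~\ref{cor:fano} with $R_n:=m_n^2V_n$ finishes. One caution on your last paragraph: a relatively very ample bundle and $-K$ need not be numerically proportional when the Picard rank exceeds one (e.g.\ $\mathcal O(1,2)$ versus $\mathcal O(2,2)$ on $\mathbb P^1\times\mathbb P^1$), but that comparison is unnecessary, because relative ampleness of $-K$ over the quasi-compact open locus of smooth Fano fibres already makes $-mK$ relatively very ample for all $m\gg0$, uniformly.
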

\begin{proof}
By Birkar's solution \cite{Bir21} of the Borisov--Alexeev--Borisov conjecture,
smooth Fano varieties of fixed dimension form a bounded family. Hence, there exist an integer \(m_n\geq2\) and
a constant \(V_n>0\), depending only on \(n\), such that
\(
    -m_nK_X
\)
is very ample and
\(
    (-K_X)^n\leq V_n
\)
for every smooth Fano \(n\)-fold \(X\). Apply
Corollary~\ref{cor:fano} and set
\(
    R_n:=m_n^2V_n.
\)
\end{proof}

\medskip

\subsection{Example: An explicit model on projective space}
\label{subsec:projective-space}
Projective space provides a model in which the centre, polarization,
fibers and slope threshold can all be computed explicitly. 

Let for $n\geq 2$
\[
    X=\mathbb P^n,
    \qquad
    A=\mathcal O_{\mathbb P^n}(1),
    \qquad
    N=n+2.
\]
Then
\[
    H_N=(n+2)A,
    \qquad
    B_N=K_X+H_N=A.
\]
Choose a general Lefschetz pencil of hypersurfaces of degree \(n+2\),
with smooth base locus
\[
    Z=D_0\cap D_\infty
       \subset\mathbb P^n,
\]
a complete intersection of type \((n+2,n+2)\), and let
\[
    \pi:Y:=\Bl_Z\mathbb P^n\longrightarrow\mathbb P^n,
    \qquad
    f:Y\longrightarrow\mathbb P^1
\]
be the associated Lefschetz blowup.  Write
\[
    h:=\pi^*c_1\bigl(\mathcal O_{\mathbb P^n}(1)\bigr)
\]
and let \(E\) denote the exceptional divisor.  The fiber and relative
canonical classes are
\[
    F=(n+2)h-E
\]
and
\[
    \Theta_N
    =
    K_{Y/\mathbb P^1}
    =
    (n+3)h-E
    =
    h+F.
\]
Consequently, Theorem~\ref{thm:main} gives cscK metrics in the classes
\[
    L_\epsilon
    :=
    F+\epsilon\Theta_N
    =
    \bigl(n+2+(n+3)\epsilon\bigr)h
    -(1+\epsilon)E
\]
for all sufficiently small rational \(\epsilon>0\).

The fibers have a particularly natural geometry.  A smooth fiber is
a hypersurface \(D\subset\mathbb P^n\) of degree \(n+2\), and
adjunction gives
\[
    K_D
    =
    \bigl(K_{\mathbb P^n}+D\bigr)|_D
    =
    \mathcal O_D(1).
\]
Thus the Lefschetz fibration is a family of canonically polarized
hypersurfaces, polarized by their canonical bundles.

In this example the slope threshold can also be written completely
explicitly.  The incidence realization identifies \(Y\)
with a hypersurface of bidegree \((n+2,1)\) in
\(\mathbb P^n\times\mathbb P^1\).  If \(h\) and \(q\) denote the
hyperplane classes of the two factors, then on \(Y\)
\[
    F=q,
    \qquad
    \Theta_N=h+q.
\]
Since \(q^2=0\), we obtain
\[
\begin{aligned}
    \Theta_N^n
    &=
    \int_{\mathbb P^n\times\mathbb P^1}
       (h+q)^n\bigl((n+2)h+q\bigr)  \\
    &=
    1+n(n+2)
    =
    (n+1)^2.
\end{aligned}
\]
Proposition~\ref{prop:exact-canonical-energy} therefore specializes to
\[
    \mathcal E_
      {K_{Y}+L_\epsilon}(L_\epsilon)
    =
    \frac{1+(n+1)^2\epsilon}
         {n(n+2)+(n+1)^2\epsilon}
    >0.
\]

After rescaling, the cscK classes take the form
\[
    h-
    \frac{1+\epsilon}
         {n+2+(n+3)\epsilon}E
\]
and approach the exact nef-boundary class
\[
    h-\frac{1}{n+2}E
    =
    \frac{1}{n+2}F
\]
as \(\epsilon\to0^+\).

For \(n=2\), the base locus consists of the sixteen points of
intersection of two general plane quartics.  The fibers are canonical
curves of genus three, and the construction produces cscK metrics on
\[
    \Bl_{\{p_1,\ldots,p_{16}\}}\mathbb P^2
\]
in the classes
\[
    (4+5\epsilon)h
    -(1+\epsilon)\sum_{i=1}^{16}E_i,
\]
with exact slope threshold
\[
    \mathcal E_
      {K_{Y}+L_\epsilon}(L_\epsilon)
    =
    \frac{1+9\epsilon}{8+9\epsilon}.
\]

\subsection{Fano K-moduli spaces}

The projectivity theorem for Fano K-moduli spaces allows us to apply
the main result not only to Fano varieties themselves, but also to the
parameter spaces which classify them \cite{LXZ22}. 

\begin{cor}\label{cor:fano-kmoduli}
Let \(M^{\mathrm{Kps}}_{n,V}\) be the projective K-moduli space of
\(n\)-dimensional K-polystable \(\mathbb Q\)-Fano varieties of
anticanonical volume \(V\).  Every irreducible component $M$ of
\[
    \bigl(M^{\mathrm{Kps}}_{n,V}\bigr)_{\mathrm{red}}
\]
admits a projective birational morphism
\[
    \mu:\widetilde M\longrightarrow M
\]
from a smooth projective variety \(\widetilde M\) carrying a cscK
metric in a rational K\"ahler class.
\end{cor}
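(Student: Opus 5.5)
The plan is to reduce the statement to Corollary~\ref{cor:birational-model}, i.e.\ to Theorem~\ref{thm:main} applied after a resolution. The only genuine input needed is that \(M\) is a complex projective variety. By the projectivity theorem for Fano K-moduli spaces \cite{LXZ22}, \(M^{\mathrm{Kps}}_{n,V}\) is a projective scheme. Its reduction is then a reduced projective scheme, and each irreducible component \(M\), with the reduced induced structure, is an integral closed subscheme of projective space. So \(M\) is a complex projective variety.

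Next I would dispose of small dimensions. If \(\dim M=0\), then \(M\) is a point, which trivially carries a (flat) cscK metric. If \(\dim M=1\), then \(M\) is a projective curve. Its normalization \(\widetilde M\to M\) is a projective birational morphism from a smooth projective curve, and by uniformization \(\widetilde M\) carries a constant curvature metric in any rational K\"ahler class, which is cscK. For \(\dim M\geq2\), apply Hironaka's resolution \cite{Hironaka} to get a projective birational morphism \(\mu_1:M'\to M\) with \(M'\) smooth projective.

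Then choose an ample line bundle \(A\) on \(M'\) and \(N\gg1\) such that \(NA\) is very ample and \(K_{M'}+NA\) is ample. Take a general Lefschetz pencil in \(|NA|\), and set \(\widetilde M:=\Bl_Z M'\) with \(\mu:=\mu_1\circ\pi\). The map \(\mu\) is a composite of projective birational morphisms, hence projective birational. Theorem~\ref{thm:main} gives a cscK metric in \(L_{N,\epsilon}=F+\epsilon\Theta_N\) for small rational \(\epsilon>0\). This class is the first Chern class of a \(\mathbb Q\)-line bundle, hence a rational K\"ahler class, as required.

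I expect essentially no obstacle beyond correctly invoking projectivity of the K-moduli space. The only point needing care is that an irreducible component of the reduction is indeed a variety, and this follows since closed reduced irreducible subschemes of a projective scheme are projective varieties. The optional remark about using the CM class when \(M\) is smooth would just take \(A\) to be the ample CM line bundle in the step above.
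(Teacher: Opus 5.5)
Your proposal is correct and follows essentially the same route as the paper. You use projectivity of the K-moduli space \cite{LXZ22} and treat the zero-dimensional components and the curve components (via normalization) directly; for $\dim M\geq 2$ you take Hironaka's resolution followed by the Lefschetz blowup of Theorem~\ref{thm:main}, which is exactly Corollary~\ref{cor:birational-model}.
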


\begin{proof}
Every irreducible component \(M\) of
\(
    \bigl(M^{\mathrm{Kps}}_{n,V}\bigr)_{\mathrm{red}}
\)
is projective. If the dimension of $M$ is at least two, the conclusion follows directly
from Corollary~\ref{cor:birational-model}. If \(\dim M=1\), the
normalization of \(M\) is a smooth projective curve and hence carries
a cscK metric. The zero-dimensional case is immediate.
\end{proof}

\begin{rem}\label{rem:cm-polarization}
Suppose that \(M\) is a smooth irreducible component of a Fano
K-moduli space, of dimension at least two, and denote its ample CM
\(\mathbb Q\)-line bundle by \(\Lambda_{\mathrm{CM}}\)
\cite{XZ20,LXZ22}. Choose \(m\) sufficiently large and divisible that
\(
    H:=m\Lambda_{\mathrm{CM}}
\)
is a very ample line bundle and \(K_M+H\) is ample. A general
Lefschetz pencil in \(|H|\) then gives a blowup
\[
    \pi:\widetilde M\longrightarrow M
\]
carrying cscK metrics in the classes
\[
    \pi^*\!\left(
        (1+2\varepsilon)m\Lambda_{\mathrm{CM}}
        +\varepsilon K_M
    \right)
    -(1+\varepsilon)E,
    \qquad
    0<\varepsilon\ll1.
\]
Thus the resulting cscK polarization is explicitly related to the
natural CM polarization on the moduli space.
\end{rem}

\noindent It remains an open question for higher dimensional varieties, whether it suffices to blow up finitely many well-chosen points, as opposed to codimension two smooth subschemes.

\end{document}